\newcommand\bigcheck[1]{#1 \raise1ex\hbox{$\hspace{-1ex}{}^\vee$}}
\newcommand\sucheck[1]{#1 \raise0.5ex\hbox{$\hspace{-1ex}{}^\vee$}}
\newtheorem{theorem}{Theorem}[section]
\newtheorem{lemma}[theorem]{Lemma}
\newtheorem{corollary}[theorem]{Corollary}
\newtheorem{proposition}[theorem]{Proposition}
\newtheorem*{lemma*}{Lemma}
\theoremstyle{definition}
\newtheorem{definition}[theorem]{Definition}
\theoremstyle{remark}
\newtheorem{remark}[theorem]{Remark}
\newtheorem{example}[theorem]{Example}
\newtheorem{conjecture}[theorem]{Conjecture}
\newcommand{\mc}[1]{{\mathcal #1}}
\newcommand{\mb}[1]{{\mathbb #1}}
\renewcommand{\tilde}{\widetilde}
\newcommand{\Mat}{\mathop{\rm Mat }}
\newcommand{\ord}{\mathop{\rm ord }}
\newcommand{\tord}{\mathop{\rm tord }}
\renewcommand{\d}{\mathop{\rm d }}%{\text{d}}%
\newcommand{\dd}{\mathop{\rm dd }}%{\text{dd}}%
\definecolor{light}{gray}{.9}
\newcommand{\pecettaold}[1]{}
\begin{document}

%%%%%%%%%%%%%%%%%%%%%%%%%%%%%%%%%%%%%%%%%%%%%%%%%%%%%%%%%%%%%%%%%%%%%%%%%%%%%%%%%%%%%%%%%%%%%%%%%%%%%%%%%%%%%%
%%%%%%%%%%%%%%% TITLE %%%%%%%%%%%%%%%%%%%%%%%%%%%%%%%%%%%%%%%%%%%%%%%%%%%%%%%%%%%%%%%%%%%%%%%%%%%%%%%%%%%%%%%
%%%%%%%%%%%%%%%%%%%%%%%%%%%%%%%%%%%%%%%%%%%%%%%%%%%%%%%%%%%%%%%%%%%%%%%%%%%%%%%%%%%%%%%%%%%%%%%%%%%%%%%%%%%%%%

\title{Some algebraic properties of differential operators}

\author{Sylvain Carpentier}
\address{S. Carpentier, Ecole Normale Superieure, Paris, France}
\email{scarpent@clipper.ens.fr}
\thanks{S. Carpentier supported in part by Department of Mathematics, M.I.T.}

\author{Alberto De Sole}
\address{A. De Sole, Dip. di Matematica, ``Sapienza'' Universit\`a di Roma, Italy }
\email{desole@mat.uniroma1.it}
\thanks{A. De Sole supported in part by Department of Mathematics, M.I.T.}

\author{Victor G. Kac}
\address{V. Kac, Department of Mathematics, M.I.T., Cambridge, MA 02139, USA.}
\email{kac@math.mit.edu}
\thanks{V. Kac supported in part by an NSF grant and by the Center of Mathematics and Theoretical Physics in Rome}

\date{}

%%%%%%%%%%%%%%%%%%%%%%%%%%%%%%%%%%%%%%%%%%%%%%%%%%%%%%%%%%%%%%%%%%%%%%%%%%%%%%%%%%%%%%%%%%%%%%%%%%%%%%%%%%%%%%
%%%%%%%%%%%%%%% Abstract %%%%%%%%%%%%%%%%%%%%%%%%%%%%%%%%%%%%%%%%%%%%%%%%%%%%%%%%%%%%%%%%%%%%%%%%%%%%%%%%%%%%%%%
%%%%%%%%%%%%%%%%%%%%%%%%%%%%%%%%%%%%%%%%%%%%%%%%%%%%%%%%%%%%%%%%%%%%%%%%%%%%%%%%%%%%%%%%%%%%%%%%%%%%%%%%%%%%%%

\begin{abstract}
\noindent 
First, we study the subskewfield of \emph{rational} pseudodifferential operators 
over a differential field $\mc K$
generated in the skewfield $\mc K((\partial^{-1}))$ of pseudodifferential operators over $\mc K$
by the subalgebra $\mc K[\partial]$ of all differential operators.
Second, we show that the Dieudonn\`e determinant of a matrix pseudodifferential operator
with coefficients in a differential subring $\mc A$ of $\mc K$
lies in the integral closure of $\mc A$ in $\mc K$,
and we give an example of a $2\times 2$ matrix with entries in $\mc A[\partial]$
whose Dieudonn\`e determiant does not lie in $\mc A$.
\end{abstract}

\maketitle

%%%%%%%%%%%%%%%%%%%%%%%%%%%%%%%%%%%%%%%%%%%%%%%%%%%%%%%%%%%%%%%%%%%%%%%%%%%%%%%%%%%%%%%%%%%%%%%%%%%%%%%%%%%%%%
%%%%%%%%%%%%%%% Sect 1 %%%%%%%%%%%%%%%%%%%%%%%%%%%%%%%%%%%%%%%%%%%%%%%%%%%%%%%%%%%%%%%%%%%%%%%%%%%%%%%%%%%%%%%
%%%%%%%%%%%%%%%%%%%%%%%%%%%%%%%%%%%%%%%%%%%%%%%%%%%%%%%%%%%%%%%%%%%%%%%%%%%%%%%%%%%%%%%%%%%%%%%%%%%%%%%%%%%%%%

\section{Introduction}
\label{sec:intro}

Let $\mc K$ be a differential field with derivation $\partial$ and let $\mc K[\partial]$
be the algebra of differential operators over $\mc K$.
First, we recall the well known fact that the ring $\mc K[\partial]$ is left and right Euclidean,
hence it satisfies the left and right Ore conditions.
Consequently, we may consider its skewfield of fractions $\mc K(\partial)$,
called the skewfield of rational pseudodifferential operators.
It follows from the Ore Theorem (see e.g. \cite{Art99}) that any rational pseudodifferential operator $R$
can be represented as a right (resp. left) fraction $AS^{-1}$ (resp. $S_1^{-1}A_1$),
where $A,A_1,S,S_1\in\mc K[\partial]$.
We show that these fractions have a unique representations in ``lowest terms''.
Namely if $S$ (resp. $S_1$) has minimal possible order and is monic,
then any other right (resp. left) representation of $R$ can be obtained by multiplying both $A$ and $S$
(resp. $A_1$ and $S_1$) on the right (resp. left) by a non-zero element of $\mc K[\partial]$
(Proposition \ref{20111003:thm2}(b)).
Though this result is very simple and natural, we were not able to find it in the literature.

In early 50's Leray \cite{Ler53} introduced an important generalization of the characteristic matrix
of a matrix pseudodifferential operator $A$.
Using this generalization Hufford in \cite{Huf65} developed a method to compute 
the Dieudonn\`e determinant $\det_1 A$
(see Section \ref{sec:4} for its definition).
Based on this method Sato and Kashiwara \cite{SK75} %(though they don't quote him!!!)
and Miyake \cite{Miy83} proved that $\det_1 A$ is holomorphic provided that $A$ has holomorphic coefficients.
In the present paper we extend this result and its proof from \cite{Miy83}
to the case when the algebra of holomorphic functions is replaced by an arbitrary 
differential domain $\mc A$
(Theorem \ref{20111208:thm2}).
Namely we show that if the coefficients of all entries of the matrix $A$ lie in the domain $\mc A$,
then its determinant $\det_1 A$ lies in the integral closure of $\mc A$
in its differential field of fractions $\mc K$.

A simple example when $\det_1 A$ does not lie in $\mc A$ itself is the following:
$$
A=\left(\begin{array}{ll}
a\partial & b\partial+\beta \\
b\partial-\beta & d\partial
\end{array}\right)\,,
$$
where $\mc A=\mb C[a^{(n)},b^{(n)},d^{(n)}\,|\,n\in\mb Z_+]/\{ad-b^2\}$,
$\partial x^{(n)}=x^{(n+1)}$ for $x=a,b,c$,
$\beta\in\mb C\backslash\{0\}$,
and $\{p\}$ denotes the differential ideal generated by $p\in\mc A$.
Then $\det_1 A=\beta^2-\beta a\left(\frac ba\right)'$
does not lie in the domain $\mc A$.

We derive from the proof of Theorem \ref{20111208:thm2} that,
for an integrally closed differential domain $\mc A$, a matrix pseudodifferential operator 
$A$ is invertible in the ring $\Mat_{\ell\times\ell}\mc A((\partial^{-1}))$
if and only if $\det_1A$ is an invertible element of $\mc A$
(Theorem \ref{20111215:thm1}).

The above problems arose in the paper \cite{DSK12} on non-local Hamiltonian structures.

We wish to thank Pavel Etingof and Andrea Maffei for useful discussions, and Toby Stafford for correspondence.

%%%%%%%%%%%%%%%%%%%%%%%%%%%%%%%%%%%%%%%%%%%%%%%%%%%%%%%%%%%%%%%%%%%%%%%%%%%%%%%%%%%%%%%%%%%%%%%%%%%%%%%%%%%%%%
%%%%%%%%%%%%%%% Sect 2 %%%%%%%%%%%%%%%%%%%%%%%%%%%%%%%%%%%%%%%%%%%%%%%%%%%%%%%%%%%%%%%%%%%%%%%%%%%%%%%%%%%%%%%
%%%%%%%%%%%%%%%%%%%%%%%%%%%%%%%%%%%%%%%%%%%%%%%%%%%%%%%%%%%%%%%%%%%%%%%%%%%%%%%%%%%%%%%%%%%%%%%%%%%%%%%%%%%%%%

\section{Differential and pseudodifferential operators over a differential field}
\label{sec:2}

Let $\mc K$ be a differential field of characteristic zero, with the derivation $\partial$.
For $a\in\mc K$, we denote $a'=\partial(a)$ and $a^{(n)}=\partial^n(a)$, for a non negative integer $n$.
We denote by $\mc C\subset\mc K$ the subfield of \emph{constants},
i.e. $\mc C=\{\alpha\in\mc K\,|\,\alpha'=0\}$.

Recall that a \emph{pseudodifferential operator} over $\mc K$
is an expression of the form
\begin{equation}\label{20111003:eq1}
A(\partial)
=\sum_{n=-\infty}^N a_n \partial^n
\,\,,\,\,\,\, a_n\in\mc K,\,N\in\mb Z\,.
\end{equation}
If $a_N\neq0$, one says that $A(\partial)$ has \emph{order} $\ord(A)=N$,
and $a_N$ is called its \emph{leading coefficient}.
One also lets $\ord(A)=-\infty$ for $A=0$.
Pseudodifferential operators form a unital associative algebra, 
denoted by $\mc K((\partial^{-1}))$,
with product $\circ$ defined by letting
\begin{equation}\label{20111130:eq1}
\partial^n\circ a=\sum_{j\in\mb Z_+}\binom nj a^{(j)}\partial^{n-j}
\,\,,\,\,\,\, n\in\mb Z,a\in\mc K\,.
\end{equation}
We will often omit $\circ$ if no confusion may arise.
Obviously, for non-zero $A,B\in\mc K((\partial^{-1}))$
we have $\ord(AB)=\ord(A)+\ord(B)$, and the leading coefficient of $AB$ is the product of their leading coefficients.

The algebra $\mc K((\partial^{-1}))$ is a skewfield extension of $\mc K$.
Indeed, if $A(\partial)\in\mc K((\partial^{-1}))$ is a non-zero pseudodifferential operator
of order $N$ as in \eqref{20111003:eq1},
its inverse $A^{-1}(\partial)\in\mc K((\partial^{-1}))$ is computed as follows.
We write
$$
A(\partial)
=a_N\Big(1+\sum_{n=-\infty}^{-1} a_N^{-1}a_{n+N} \partial^n\Big)\partial^N\,,
$$
and expanding by geometric progression, we get
\begin{equation}\label{20111130:eq2}
A^{-1}(\partial)
=
\partial^{-N}\circ \sum_{k=0}^\infty\Big(-\sum_{n=-\infty}^{-1} a_N^{-1}a_{n+N} \partial^n\Big)^k\circ a_N^{-1}\,,
\end{equation}
which is well defined as a pseudodifferential operator in $\mc K((\partial^{-1}))$,
since, by formula \eqref{20111130:eq1},
the powers of $\partial$ are bounded above by $-N$,
and the coefficient of each power of $\partial$ is a finite sum.

The \emph{symbol} of the pseudodifferential operator $A(\partial)$ in \eqref{20111003:eq1}
is the formal Laurent series
$A(\lambda)=\sum_{n=-\infty}^N a_n \lambda^n\,\in\mc K((\lambda^{-1}))$,
where $\lambda$ is an indeterminate commuting with $\mc K$.
We thus get a bijective map $\mc K((\partial^{-1}))\to\mc K((\lambda^{-1}))$
(which is not an algebra homomorphism).
A closed formula for the associative product in $\mc K((\partial^{-1}))$
in terms of the corresponding symbols is the following:
\begin{equation}\label{20111003:eq2}
(A\circ B)(\lambda)=A(\lambda+\partial)B(\lambda)\,.
\end{equation}
Here and further on, we always expand an expression as $(\lambda+\partial)^{n},\,n\in\mb Z$, 
in non-negative powers of $\partial$:
\begin{equation}\label{20111004:eq1}
(\lambda+\partial)^{n}=\sum_{j=0}^\infty\binom nj \lambda^{n-j}\partial^j\,.
\end{equation}
Therefore, the RHS of \eqref{20111003:eq2} means
$\sum_{m,n=-\infty}^N\sum_{j=0}^\infty \binom{m}{j}a_m b_n^{(j)} \lambda^{m+n-j}$.

The algebra (over $\mc C$) $\mc K[\partial]$ of \emph{differential operators} over $\mc K$
consists of operators of the form 
$$
A(\partial)=\sum_{n=0}^Na_n\partial^n\,,\,\,a_n\in\mc K,\,N\in\mb Z_+\,.
$$
It is a subalgebra of $\mc K((\partial^{-1}))$, and a bimodule over $\mc K$.

\begin{proposition}
The algebra $\mc K[\partial]$ of differential operators over the differential field $\mc K$ is 
right (respectively left) \emph{Euclidean},
i.e. for every $A,B\in\mc K[\partial]$, with $A\neq0$,
there exist unique $Q,R\in\mc K[\partial]$ such that $B=AQ+R$ (resp. $B=QA+R$)
and either $R=0$ or $\ord(R)<\ord(A)$.
\end{proposition}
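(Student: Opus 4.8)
The plan is to prove the right-Euclidean statement; the left case is entirely symmetric. The existence of $Q$ and $R$ I would establish by induction on $\ord(B)$, mimicking the classical polynomial division algorithm, the only new feature being that $\partial$ does not commute with the coefficients. If $B=0$ or $\ord(B)<\ord(A)$, I take $Q=0$ and $R=B$. Otherwise, write $\ord(A)=m$ with leading coefficient $a_m\neq0$ and $\ord(B)=n\geq m$ with leading coefficient $b_n$. The key observation, which follows from the product rule \eqref{20111130:eq1}, is that $A\circ(a_m^{-1}b_n\partial^{n-m})$ has order $n$ with leading coefficient exactly $a_m\cdot a_m^{-1}b_n=b_n$, matching the leading term of $B$. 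Hence $B':=B-A\circ(a_m^{-1}b_n\partial^{n-m})$ has order strictly less than $n$. By the induction hypothesis there exist $Q',R\in\mc K[\partial]$ with $B'=AQ'+R$ and $R=0$ or $\ord(R)<m$, and setting $Q=a_m^{-1}b_n\partial^{n-m}+Q'$ gives the desired decomposition $B=AQ+R$.

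For uniqueness, I would suppose $B=AQ_1+R_1=AQ_2+R_2$ with both remainders satisfying the degree bound. Subtracting gives $A(Q_1-Q_2)=R_2-R_1$. The right-hand side is either $0$ or has order $<m=\ord(A)$. But if $Q_1-Q_2\neq0$, then since $\mc K((\partial^{-1}))$ has no zero divisors (orders add and leading coefficients multiply, as recalled in the excerpt), the left-hand side $A(Q_1-Q_2)$ has order $m+\ord(Q_1-Q_2)\geq m$, a contradiction. Therefore $Q_1=Q_2$, and consequently $R_1=R_2$.

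I do not expect any genuine obstacle here: the argument is the standard Euclidean division adapted to a non-commutative setting. The only point requiring a little care — and the place where the one-sidedness of the statement enters — is verifying that multiplying $A$ on the right by the monomial $a_m^{-1}b_n\partial^{n-m}$ produces the correct leading coefficient. This is immediate from the fact that $\ord(AB)=\ord(A)+\ord(B)$ with multiplicative leading coefficients, so the noncommutativity contributes only to lower-order terms and never interferes with the induction. The absence of zero divisors, which is precisely what makes uniqueness work, is likewise already recorded in the excerpt, so both halves of the proof rest on facts stated above.
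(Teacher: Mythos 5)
Your proposal is correct and follows essentially the same route as the paper: the same subtraction $B-A\circ\bigl(a_m^{-1}b_n\partial^{n-m}\bigr)$ to lower the order in the existence induction, and the same order-comparison argument (relying on the absence of zero divisors in $\mc K((\partial^{-1}))$) for uniqueness. No gaps.
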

\begin{proof}
First, we prove existence of $Q$ and $R$.
If $B=0$, then we can take $Q=R=0$.
If $B\neq0$ and $\ord(B)<\ord(A)$, we can take $Q=0$ and $R=B$.
For $B\neq0$ and $\ord(B)\geq\ord(A)$, we proceed by induction on $\ord(B)$.
If $\ord(B)=0(=\ord(A))$, then $A,B$ lie in $\mc K$, and we can take $Q=\frac BA,\, R=0$.
Finally, consider the case when $\ord(A)=m\geq0$ and $\ord(B)=n\geq1$, with $n\geq m$.
Letting $a_m$ be the leading coefficeint of $A$ and $b_n$ be the leading coefficient of $B$,
the differential operator $\tilde B=B-A\frac{b_n}{a_m}\partial^{n-m}$
has order $\ord(\tilde B)<n$.
Hence, we can apply the inductive assumption to find $\tilde Q$ and $\tilde R$ in $\mc K[\partial]$
such that
$\tilde B=A\tilde Q+\tilde R$ and $\ord(\tilde R)<m$.
Then, letting $Q=\frac{b_n}{a_m}\partial^{n-m}+\tilde Q$ and $R=\tilde R$,
we get the desired decomposition.
As for the uniqueness, if one has $AQ+R=AS+T$ with $\max(deg(R),deg(T))<deg(A)$ then $A(Q-S)=T-R$. 
Comparing the orders of both sides, we conclude that $Q=S$, hence $R=T$.
The proof of the left Euclidean condition is the same.
\end{proof}
By the standard argument, we obtain the following:
\begin{corollary}\label{20111205:cor}
Every non-zero right (resp. left) ideal $\mc I$ of $\mc K[\partial]$ is principal:
$\mc I=A\mc K[\partial]$ (resp. $\mc I=\mc K[\partial]A$),
and it is generated by its element $A\in\mc I$ of minimal order 
(defined up to multiplication on the right (resp. left) by a non-zero element of $\mc K$).
\end{corollary}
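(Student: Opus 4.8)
The plan is to derive this corollary from the right (resp. left) Euclidean property just established, using the standard argument that a Euclidean domain is a principal ideal domain, adapted to the noncommutative one-sided setting. The main point is simply that the division algorithm lets us reduce any element of the ideal modulo a chosen minimal-order generator, and the remainder is forced to vanish.

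First I would treat the right-ideal case; the left case is symmetric. Let $\mc I$ be a non-zero right ideal. Among all non-zero elements of $\mc I$, choose one, call it $A$, of minimal order; such an $A$ exists since orders are non-negative integers. Clearly $A\mc K[\partial]\subseteq\mc I$, since $\mc I$ is a right ideal. For the reverse inclusion, take an arbitrary $B\in\mc I$ and apply the right Euclidean property of the previous Proposition to write $B=AQ+R$ with $Q,R\in\mc K[\partial]$ and either $R=0$ or $\ord(R)<\ord(A)$. Now $R=B-AQ$ lies in $\mc I$, because $B\in\mc I$ and $AQ\in A\mc K[\partial]\subseteq\mc I$. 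If $R$ were non-zero, then $R$ would be an element of $\mc I$ of order strictly less than $\ord(A)$, contradicting the minimality of $\ord(A)$. Hence $R=0$, so $B=AQ\in A\mc K[\partial]$, and therefore $\mc I=A\mc K[\partial]$.

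It remains to address the uniqueness clause: the generator $A$ is determined up to right multiplication by a non-zero element of $\mc K$. If $A$ and $A_1$ are two minimal-order generators, then each divides the other on the right, so $A_1=AU$ and $A=A_1V$ for some $U,V\in\mc K[\partial]$; comparing orders (using that order is additive under $\circ$) forces $\ord(U)=\ord(V)=0$, i.e. $U,V\in\mc K\setminus\{0\}$. This is the only step requiring a moment's care, but it is immediate from additivity of the order.

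I do not anticipate any genuine obstacle here, since everything follows formally from the Euclidean property and the additivity of $\ord$; the left-ideal statement is obtained verbatim by replacing right division $B=AQ+R$ with left division $B=QA+R$ and right multiplication by left multiplication throughout.
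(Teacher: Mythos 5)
Your proof is correct and is precisely the "standard argument" the paper invokes without writing out: choose a minimal-order element, divide any other element by it, and use minimality to force the remainder to vanish, with the uniqueness clause following from additivity of $\ord$. Nothing further is needed.
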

\begin{remark}\label{20111202:rem}
If $\mc A$ is a differentail domain, with field of fractions $\mc K$,
then $\mc K((\partial^{-1}))$ is the skewfield of fractions of $\mc A((\partial^{-1}))$.
Indeed, if $A\in\mc A((\partial^{-1}))$ is non-zero,
then equation \eqref{20111130:eq2} provides its inverse in $\mc K((\partial^{-1}))$.
\end{remark}
\begin{remark}\label{20111003:rem}
If $\mc A$ is a (commutative) differential ring,
and $A\in\mc A((\partial^{-1}))$ is an element whose leading coefficient is not a zero divisor,
then equation \eqref{20111130:eq2} still makes sense, showing 
that $A$ is invertible in $\mc Q\mc A((\partial^{-1}))$,
where $\mc Q\mc A$ is the ring of fractions of $\mc A$, 
obtained by inverting all non zero divisors of $\mc A$.
In general, though,
the leading coefficient of an invertible pseudodifferential operator
$A\in\mc A((\partial^{-1}))$ is either invertible or a zero divisor.
As an example of the latter case,
let $\mc A=\mb F\oplus\mb F$, with $\partial$ acting as zero.
Then
$A(\partial)=(0,1)+(1,0)\partial$ is invertible
and its inverse is $(0,1)+(1,0)\partial^{-1}$.
\end{remark}

%%%%%%%%%%%%%%%%%%%%%%%%%%%%%%%%%%%%%%%%%%%%%%%%%%%%%%%%%%%%%%%%%%%%%%%%%%%%%%%%%%%%%%%%%%%%%%%%%%%%%%%%%%%%%%
%%%%%%%%%%%%%%% Sect 3 %%%%%%%%%%%%%%%%%%%%%%%%%%%%%%%%%%%%%%%%%%%%%%%%%%%%%%%%%%%%%%%%%%%%%%%%%%%%%%%%%%%%%%%
%%%%%%%%%%%%%%%%%%%%%%%%%%%%%%%%%%%%%%%%%%%%%%%%%%%%%%%%%%%%%%%%%%%%%%%%%%%%%%%%%%%%%%%%%%%%%%%%%%%%%%%%%%%%%%

\section{Rational pseudodifferential operators}
\label{sec:3}

As before, let $\mc K$ be a differential field with derivation $\partial$.
\begin{definition}\label{20110926:def}
The algebra $\mc K(\partial)$ of \emph{rational pseudodifferential operators} over $\mc K$
is the smallest subskewfield of $\mc K((\partial^{-1}))$ containing $\mc K[\partial]$.
\end{definition}
We can describe the algebra $\mc K(\partial)$ of rational pseudodifferential operators 
more explicitly using the Ore theory (for a review of the Ore theory, 
in the case of an arbitrary non-commutative domain, see for example \cite{Art99}).
\begin{lemma}\label{20111205:lem}
The algebra $\mc K[\partial]$ satisfies the right (resp. left) Ore condition:
for every $A,B\in\mc K[\partial]$,
there exist $A_1,B_1\in\mc K[\partial]$ not both zero such that
\begin{equation}\label{20111003:eq3}
AB_1=BA_1
\,\,\,\,
\Big(\text{resp. } B_1A=A_1B\Big)
\,.
\end{equation}
\end{lemma}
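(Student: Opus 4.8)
The plan is to prove the right Ore condition (the left case being symmetric) by a dimension-counting argument exploiting the left Euclidean structure of $\mc K[\partial]$, which was established in the Proposition above. The point is that we want to find $A_1,B_1\in\mc K[\partial]$, not both zero, satisfying $AB_1=BA_1$. I may assume $A,B$ are both nonzero, since if either vanishes the condition is trivially met (e.g. if $A=0$, take $A_1=0$ and $B_1$ arbitrary nonzero). So set $m=\ord(A)$ and $n=\ord(B)$, both nonnegative integers.

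First I would think of $\mc K[\partial]$ as a left vector space over $\mc K$, with the operators $\{\partial^j\}_{j\geq0}$ forming a basis; the subspace of operators of order $\leq d$ has dimension $d+1$ over $\mc K$. The key observation is that multiplication on the left by a fixed nonzero operator preserves order in a controlled way: if $P\in\mc K[\partial]$ has order $p$, then $AP$ has order $m+p$ and $BP$ has order $n+p$. So I would consider, for a large common bound $N$, the two families of products $\{AP : \ord(P)\leq N\}$ and $\{BP : \ord(P)\leq N\}$, all of which land inside the finite-dimensional space $\mc K[\partial]_{\leq N+\max(m,n)}$ of operators of order at most $N+\max(m,n)$.

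The central step is the counting inequality. The map $P\mapsto AP$ is injective (since $\mc K[\partial]$ is a domain, as $\ord$ is additive), so the image of operators of order $\leq N$ under left multiplication by $A$ is a $\mc K$-subspace of dimension $N+1$; likewise for $B$. Both images sit inside $\mc K[\partial]_{\leq N+\max(m,n)}$, which has $\mc K$-dimension $N+\max(m,n)+1$. Hence the two images together span at most $N+\max(m,n)+1$ dimensions but have total dimension $2(N+1)$; for $N$ large enough (concretely $N+1>\max(m,n)$, i.e. $N\geq\max(m,n)$) we get $2(N+1)>N+\max(m,n)+1$, forcing the intersection of the two images to be nonzero. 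A nonzero element $C$ in the intersection can be written as $C=AB_1=BA_1$ for some $B_1,A_1\in\mc K[\partial]_{\leq N}$; these are not both zero because $C\neq0$, and they give exactly the relation \eqref{20111003:eq3}.

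The main obstacle is purely bookkeeping: I must be careful that left multiplication by $A$ really is $\mc K$-linear in the left-module structure (it is, since $A(cP)$ vs.\ scalars on the left requires using the right $\mc K$-module structure of the source — so the cleanest framing is to regard $\{P : \ord(P)\leq N\}$ as a right $\mc K$-vector space and note that $P\mapsto AP$ is right $\mc K$-linear and injective, landing in the right $\mc K$-vector space $\mc K[\partial]_{\leq N+\max(m,n)}$). Once the linearity conventions are fixed so that "dimension" is unambiguous, the inequality $2(N+1)>N+\max(m,n)+1$ is elementary and the nonzero intersection follows immediately. The left Ore condition is proved identically, replacing left multiplication by right multiplication and swapping the roles of the two $\mc K$-module structures.
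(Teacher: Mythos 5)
Your argument is correct, but it takes a genuinely different route from the paper. The paper proves the Ore condition by induction on $\ord(A)$ using the Euclidean division established in the preceding Proposition: writing $B=AQ+R$ with $R=0$ or $\ord(R)<\ord(A)$, the case $R=0$ is immediate, and otherwise the inductive hypothesis applied to the pair $(R,A)$ yields $RA_1=AR_1$, whence $BA_1=A(QA_1+R_1)$. Your proof instead uses the pigeonhole principle on dimensions: left multiplication by $A$ (resp.\ $B$) is an injective right $\mc K$-linear map sending the $(N+1)$-dimensional space $V_N$ of operators of order $\le N$ into $V_{N+\max(m,n)}$, and for $N\ge\max(m,n)$ the inequality $2(N+1)>N+\max(m,n)+1$ forces $AV_N\cap BV_N\neq 0$. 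The bookkeeping you flag is handled correctly: $P\mapsto AP$ is indeed right $\mc K$-linear by associativity, $V_N$ is a right $\mc K$-subspace of dimension $N+1$ since $\{\partial^j\}_{j\le N}$ is a basis of $V_N$ for the right module structure as well, and injectivity follows from additivity of $\ord$. What each approach buys: the paper's Euclidean argument is constructive (it is essentially the Euclidean algorithm) and elementary, while yours is non-constructive but more robust --- it only uses that $\mc K[\partial]$ is a domain, free over $\mc K$ with a filtration of linear growth compatible with multiplication, so it extends verbatim to filtered domains of polynomial growth that are not Euclidean. As a minor bonus, your argument produces $A_1,B_1$ that are \emph{both} nonzero, slightly stronger than the ``not both zero'' asserted in the statement.
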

\begin{proof}
If $A=0$, the statement is obvious since we can take $A_1=0$.
If $A\neq0$, we prove the claim by induction on $\ord(A)$.
Since $\mc K[\partial]$ is Euclidean,
there exist $Q,R\in\mc K[\partial]$ such that $B=AQ+R$ 
and either $R=0$ or $\ord(R)<\ord(A)$.
If $R=0$, then we can take $B_1=Q$ and $A_1=1$ and we are done.
If $R\neq0$, then by inductive assumption
there exist $A_1,R_1\in\mc K[\partial]$, both non-zero, such that $RA_1=AR_1$.
Hence, $BA_1=A(QA_1+R_1)$, therefore the claim holds with $B_1=QA_1+R_1$.
The left Ore condition is proved in the same way.
\end{proof}
\begin{remark}\label{20111206:rem1}
By Lemma \ref{20111205:lem}, if $A,B$ are non-zero elements of $\mc K[\partial]$,
then the right (resp. left) principal ideals generated by them have a non-zero intersection.
By Corollary \ref{20111205:cor} the intersection is again a right (resp. left) principal ideal,
generated by some element $M$, defined uniquely up to multiplication by a non-zero element of $\mc K$ 
on the right (resp. left).
This element $M\in\mc K[\partial]$ is the right (resp. left) 
\emph{least common multiple} (lcm) of $A$ and $B$.

Also, by Corollary \ref{20111205:cor} the sum of the right (resp. left) ideals generated by $A$ and $B$
is a right (resp. left) principal ideal, generated by some element $D$, 
defined uniquely up to multiplication by a non-zero element of $\mc K$ 
on the right (resp. left).
This element $D\in\mc K[\partial]$ is the right (resp. left) 
\emph{greatest common divisor} (gcd) of $A$ and $B$.
\end{remark}
\begin{proposition}\label{20111003:thm2}
\begin{enumerate}[(a)]
\item
The skewfield of rational pseudodifferential operators over $\mc K$ is
$$
\mc K(\partial)
=\big\{AS^{-1}\,\big|\, A,S\in\mc K[\partial],\,S\neq0\big\}
=\big\{S^{-1}A\,\big|\, A,S\in\mc K[\partial],\,S\neq0\big\}\,.
$$
In other words,
every rational pseudodifferential operator $L\in\mc K(\partial)$
can be written as a right and a left fraction
$L=AS^{-1}=S_1^{-1}A_1$
for some $A,A_1,S,S_1\in\mc A[\partial]$ with $S,S_1\neq0$.
\item
The decompositions $L=AS^{-1}=S_1^{-1}A_1$ of an element $L\in\mc K(\partial)$
are unique if we require that $S$ and $S_1$ have minimal possible order and leading coefficient 1.
Any other decomposition $L=\tilde A\tilde S^{-1}$
(resp. $L=\tilde S_1^{-1}\tilde A_1$),
with $\tilde A,\tilde A_1,\tilde S\tilde S_1\in\mc K[\partial]$, $\tilde S,\tilde S_1\neq0$,
can be obtained from the minimal one by multiplying both $A$ and $S$
(resp. $A_1$ and $S_1$) by the same non-zero factor from $\mc K[\partial]$ on the right (resp. left).
\end{enumerate}
\end{proposition}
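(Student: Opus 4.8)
The plan is to prove both parts by combining the Ore conditions (Lemma~\ref{20111205:lem}) with the principal-ideal structure of $\mc K[\partial]$ (Corollary~\ref{20111205:cor}), working inside the ambient skewfield $\mc K((\partial^{-1}))$, where all the inverses $S^{-1}$ already live.

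For part (a), I would first set $R=\{AS^{-1}\mid A,S\in\mc K[\partial],\,S\neq0\}\subseteq\mc K((\partial^{-1}))$ and check that $R$ is a subskewfield containing $\mc K[\partial]$; by minimality of $\mc K(\partial)$ this forces $\mc K(\partial)\subseteq R$, while the reverse inclusion is immediate, since $\mc K(\partial)$ is a skewfield containing $\mc K[\partial]$ and hence contains every $AS^{-1}$. Closure of $R$ under the ring operations is exactly where the right Ore condition enters: given $S,B$ with $S\neq0$, applying Lemma~\ref{20111205:lem} to the pair $(S,B)$ yields $B_1,S_1$ not both zero with $SB_1=BS_1$, and $S_1\neq0$ (else $SB_1=0$ forces $B_1=0$), so that $S^{-1}B=B_1S_1^{-1}$; this rewriting lets one collapse a product $(AS^{-1})(BT^{-1})=(AB_1)(TS_1)^{-1}$ into a single right fraction, and a common right multiple of denominators (the lcm of Remark~\ref{20111206:rem1}) handles sums, while $(AS^{-1})^{-1}=SA^{-1}$ disposes of inverses when $A\neq0$. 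These are the routine verifications of the Ore localization, for which I would cite \cite{Art99}. Finally, the identity $R=\{S^{-1}A\}$ follows from the left Ore condition: given $AS^{-1}$, the left condition applied to $(A,S)$ produces $X,Y$ not both zero with $XA=YS$, where $X\neq0$ since $S\neq0$, whence $AS^{-1}=X^{-1}Y$; the symmetric argument rewrites any left fraction as a right one.

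For part (b), the key observation is to identify the set of admissible right denominators of a fixed $L\in\mc K(\partial)$. Since $L=AS^{-1}$ with $A\in\mc K[\partial]$ is equivalent to $LS\in\mc K[\partial]$, the relevant object is $\mc I=\{S\in\mc K[\partial]\mid LS\in\mc K[\partial]\}$, and a one-line check shows it is a right ideal: if $LS\in\mc K[\partial]$ and $T\in\mc K[\partial]$, then $L(ST)=(LS)T\in\mc K[\partial]$. It is nonzero by part (a), so by Corollary~\ref{20111205:cor} we have $\mc I=S_0\mc K[\partial]$ for an element $S_0$ of minimal order, unique up to right multiplication by a nonzero element of $\mc K$; normalizing its leading coefficient to $1$ pins $S_0$ down, and then $A_0:=LS_0\in\mc K[\partial]$ is forced. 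Every other denominator is $S=S_0T$ with $0\neq T\in\mc K[\partial]$, whence $A=LS=A_0T$, so $(A,S)=(A_0T,S_0T)$ is obtained from the minimal pair by right multiplication by $T$, exactly as claimed. The left statement is identical, with $\mc I$ replaced by the left ideal $\{S_1\in\mc K[\partial]\mid S_1L\in\mc K[\partial]\}$.

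I expect the only genuine subtlety to lie in part (a): carefully checking that $R$ is closed under addition and multiplication, i.e.\ that common denominators can always be formed, which is precisely the content of the Ore condition and the reason Lemma~\ref{20111205:lem} was proved first. Part (b), by contrast, is clean once the set of denominators is recognized as a principal right (resp.\ left) ideal.
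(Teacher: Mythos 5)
Your proof is correct and follows essentially the same route as the paper: part (a) by using the right (resp.\ left) Ore condition to show the set of right (resp.\ left) fractions is a subskewfield containing $\mc K[\partial]$, and part (b) by recognizing the admissible denominators of $L$ as a nonzero principal right (resp.\ left) ideal and invoking Corollary~\ref{20111205:cor}. Your formulation $\mc I=\{S\in\mc K[\partial]\mid LS\in\mc K[\partial]\}$ is a slightly cleaner packaging of the paper's ideal (it makes additive closure automatic, avoiding the paper's case $S_1+S_2\neq0$), but the argument is the same.
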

\begin{proof}
The proof of part (a) can be deduced from the general Ore Theorem.
Since in this case we have explicit realizations of the sets
of right and of left fractions of $\mc A[\partial]$ as subsets of $\mc K((\partial^{-1}))$,
we can give a direct proof using the right and left Ore conditions.

In order to prove part (a) it sufficies to show
that the sets
$\mc S_{\text{right}}=\big\{AS^{-1}\,\big|\, A,S\in\mc K[\partial],\,S\neq0\big\}$
and $\mc S_{\text{left}}=\big\{S^{-1}A\,\big|\, A,S\in\mc K[\partial],\,S\neq0\big\}$
are closed under addition and multiplication.
Let $A,B,S,T\in\mc K[\partial]$,
with $S,T\neq0$.
By the right Ore condition \eqref{20111003:eq3}, there exist non-zero
$S_1,\,T_1\in\mc F[\partial]$ 
such that $ST_1=TS_1$.
Hence,
$$
AS^{-1}+BT^{-1}
=
\big(AT_1+BS_1\big)\big(S\circ T_1\big)^{-1}\,,
$$
proving that $\mc S_{\text{right}}$ is closed under addition.
Again by the right Ore condition, there exist
$S_1,\,B_1\in\mc A[\partial]$, 
such that $S_1\neq0$
and $SB_1=BS_1$.
Hence,
$$
AS^{-1}\circ BT^{-1}
=
\big(A\circ B_1\big)\big(T\circ S_1\big)^{-1}\,,
$$
proving that $\mc S_{\text{right}}$ is closed under multiplication.
Similarly for the set $\mc S_{\text{left}}$ of left fractions.

For part (b), consider the set
$$
\mc I=\big\{S\in\mc K[\partial]\backslash\{0\}\,\big|\,L=AS^{-1} \text{ for some } A\in\mc K[\partial]\big\}\cup\{0\}
\,.
$$
We claim that $\mc I$ is a right ideal of $\mc K[\partial]$.
First, if $L=AS^{-1}$ and $0\neq T\in\mc K[\partial]$, then $L=AT(ST)^{-1}$,
proving that $\mc IT\subset\mc I$.
Moreover, if $L=A_1S_1^{-1}=A_2S_2^{-1}$, with $S_1+S_2\neq0$,
then $L=(A_1+A_2)(S_1+S_2)^{-1}$, proving that $\mc I$ is closed under addition.
By Corollary \ref{20111205:cor}
there is a unique monic element $S$ in $\mc I$ of minimal order,
and every other element of $\mc I$ is obtained from $S$ by a multiplication by a non-zero element of $\mc K[\partial]$.
This proves part (b) for right fractions. The proof for left fractions is the same.
\end{proof}
\begin{remark}\label{20111205:rem2a}
If $\mc A$ is a differential domain and $\mc K$ is its field of fractions,
then we define $\mc A(\partial)=\mc K(\partial)$, 
and it is easy to see, clearing the denominators, that 
all its elements are of the form $AS^{-1}$ (or $S^{-1}A$)
for $A,S\in\mc A[\partial],\,S\neq0$.
\end{remark}
\begin{remark}\label{20111205:rem2b}
If $\mc A$ is a differential domain,
we can ask whether the right (resp. left) Ore condition holds
for any multiplicative subset $\mc S\subset\mc A[\partial]$:
for every $A\in\mc A[\partial]$ and $S\in\mc S$, there exist $A_1\in\mc A[\partial]$
and $S_1\in\mc S$ such that $AS_1=SA_1$ (resp. $S_1A=A_1S$).
In fact, this is false,
as the following example shows.
Consider the algebra of differential polynomials in one variable,
$\mc A=\mb C[u^{(n)},\,n\in\mb Z_+]$, where $\partial(u^{(n)})=u^{(n+1)}$,
and let $\mc S\subset\mc A[\partial]$ be the multiplicative subset consisting of
differential operators $A\in\mc A[\partial]$ with leading coefficient 1.
Letting $A=u\in\mc A[\partial]$ and $S=\partial\in\mc S$,
we find $A_1=u^2,\,S_1=u\partial+2u'\in\mc A[\partial]$
such that $AS_1=SA_1$, but it is not hard to prove that $S_1$ cannot be chosen with leading coefficient 1
(unless we allow to have the other coefficients in the field of fractions $\mc K$).
This example provides an element $\partial^{-1}\circ u=u^2(u\partial+2u')^{-1}\in\mc A((\partial^{-1}))$
which is a left fraction but not a right fraction
(i.e. it is not of the form $AS^{-1}$ with $A\in\mc A[\partial]$ and $S\in\mc S$).
\end{remark}
\begin{remark}\label{20111205:rem2c}
If $\mc A$ is a differential (commutative associative) ring possibly with zero divisors,
we can define $\mc A(\partial)$ as the ring generated by $\mc A[\partial]$
and the inverses of all elements $S\in\mc A[\partial]$ which are invertible in $\mc A((\partial^{-1}))$.
This contains both sets $\mc S_{\text{right}}$ and $\mc S_{\text{left}}$ of right and left fractions,
$$
\begin{array}{l}
\mc S_{\text{right}}=\big\{AS^{-1}\,\big|\,A\in\mc A[\partial],S\in\mc A[\partial]\cap\mc A((\partial^{-1}))^\times\big\}
\,,\\
\mc S_{\text{left}}=\big\{S^{-1}A\,\big|\,A,S\in\mc A[\partial],S\in\mc A[\partial]\cap\mc A((\partial^{-1}))^\times\big\}
\,,
\end{array}
$$
but, in general, these two sets are not equal.
An example when $\mc S_{\text{left}}$ and $\mc S_{\text{right}}$ are not equal
was provided in Remark \ref{20111205:rem2b}:
$\partial^{-1}\circ u$ lies in $\mc S_{\text{left}}$ but not in $\mc S_{\text{right}}$.
Note though that, when $\mc A$ is a domain, this definition of $\mc A(\partial)$ 
is NOT the same as $\mc K(\partial)$ 
(which was the definition of $\mc A(\partial)$ given in Remark \ref{20111205:rem2a} in the case of domains).
\end{remark}
\pecettaold{
When $\mc A$ is a domain, we have the sets:
$$
\begin{array}{l}
\mc S_{\text{right}}\,,\,\,\mc S_{\text{left}}
\subset
\mc A(\partial)=
\Big\langle AS^{-1}\,\Big|\,A\in\mc A[\partial],S\in\mc A[\partial]\cap\mc A((\partial^{-1}))^\times \Big\rangle
\\
\subset
\mc K(\partial)\cap\mc A((\partial^{-1}))=
\Big\{ AS^{-1}\,\Big|\,A,S\in\mc A[\partial],S\neq0,AS^{-1}\in\mc A((\partial^{-1})) \Big\}
\end{array}
$$
The first inlusion is strict (see example in remark).
QUESTION: is the second inclusion equal?
}

%%%%%%%%%%%%%%%%%%%%%%%%%%%%%%%%%%%%%%%%%%%%%%%%%%%%%%%%%%%%%%%%%%%%%%%%%%%%%%%%%%%%%%%%%%%%%%%%%%%%%%%%%%%%%%
%%%%%%%%%%%%%%% Sect 4 %%%%%%%%%%%%%%%%%%%%%%%%%%%%%%%%%%%%%%%%%%%%%%%%%%%%%%%%%%%%%%%%%%%%%%%%%%%%%%%%%%%%%%%
%%%%%%%%%%%%%%%%%%%%%%%%%%%%%%%%%%%%%%%%%%%%%%%%%%%%%%%%%%%%%%%%%%%%%%%%%%%%%%%%%%%%%%%%%%%%%%%%%%%%%%%%%%%%%%

\section{Matrix pseudodifferential operators}
\label{sec:4}

As in the previous sections, let $\mc K$ be a differential field with derivation $\partial$.
We recall here some linear algebra over the skewfield $\mc K((\partial^{-1}))$
and, in particular, the notion of the Dieudonn\'e determinant
(see \cite{Art57} for an overview over an arbitrary skewfield).

Let $\mc D$ be a subskewfield of $\mc K((\partial^{-1}))$.
We are interested in the case when $\mc D=\mc K(\partial)$ or $\mc K((\partial^{-1}))$.
An \emph{elementary row operation} of the matrix pseudodifferential operator 
$A\in\Mat_{m\times\ell}\mc D$
is either a permutation of two rows of it, 
or the operation $\mc T(i,j;P)$, where $1\leq i\neq j\leq m$ and $P\in\mc D$,
which replaces the $j$-th row by itself minus $i$-th row
multiplied on the left by $P$.
Using the usual Gauss elimination, we can get the analogues
of standard linear algebra theorems for matrix pseudodifferential operators.
In particular, 
any $m\times\ell$ matrix pseudodifferential operator $A$ with entries in $\mc D$
can be brought by elementary row operations to a row echelon form (over $\mc D$).

The \emph{Dieudonn\'e determinant} of $A\in\Mat_{\ell\times\ell}\mc K((\partial^{-1}))$
has the form $\det A=c\lambda^d$, where $c\in\mc K$, $\lambda$ is an indeterminate, and $d\in\mb Z$.
It is defined by the following properties:
$\det A$ changes sign if we permute two rows of $A$,
and it is unchanged under any elementary row operation $\mc T(i,j;P)$ defined above,
for aribtrary $i\neq j$ and a pseudodifferential operator $P\in\mc K((\partial^{-1}))$;
furthermore, if $A$ is upper triangular,
with non-zero diagonal entries $A_{ii}\in\mc K((\partial^{-1}))$ of order $n_i$ and leading coefficient $a_i\in\mc K$,
then 
$$
\det A=(\det{}_1A)\lambda^{\d(A)}
\,\,,\,\,\,\,
\text{ where }\,\,
\det{}_1A=\prod_{i=1}^\ell a_i
\,\,\,\,\text{ and }\,\,
\d(A)=\sum_{i=1}^\ell n_i\,,
$$
and, if one of the diagonal entries is zero, we let $\det A=0$, $\det_1A=0$  and $\d(A)=-\infty$.
It follows from the results in \cite{Die43} that the Dieudonn\'e determinant is well defined
and $\det(AB)=(\det A)(\det B)$
for every $A,B\in\Mat_{\ell\times\ell}\mc K((\partial^{-1}))$.
\begin{remark}\label{20111212:rem1}
If $A\in\Mat_{\ell\times\ell}\mc K[\partial]$ is a matrix differential operator,
then it can be brought to an upper triangular form by elementary transformations
involving only differential operators (see \cite[Lemma A.2.6]{DSK11}).
Hence, if $\det A\neq0$, then $\d(A)$ is a non-negative integer.
\end{remark}
\begin{remark}\label{20111215:rem}
Let $A\in\Mat_{\ell\times\ell}\mc K((\partial^{-1}))$,
and denote by $A^*$ its adjoint matrix.
Then $\det A=(-1)^{\d(A)}\det A^*$.
Indeed, if $A=ET$, where $E$ is product of elementary matrices and $T$ is upper triangular,
then $A^*=T^*E^*$ and, clearly, $\det E^*=\det E$, while $\det T^*=(-1)^{\d(A)}\det T$.
However, in general $\det A$ and $\det A^T$ are not related to each other in an obvious way,
as the following example shows:
for 
$A=\left(\begin{array}{ll}
\partial & x\partial \\ 1 & x
\end{array}\right)$,
we have $\det A=1$, while $\det A^T=0$.
Note that, in this example, the matrix $A$ is invertible in $\Mat_{2\times2}\mc K((\partial^{-1}))$,
its inverse being
$A^{-1}=\left(\begin{array}{ll}
x & -x\partial+1 \\ -1 & \partial
\end{array}\right)$,
while $A^T$ is not invertible.
\end{remark}
Our main interest in the Deudonn\'e determinant is that it gives a way to characterize invertible
matrix pseudodifferential operators.
This is stated in the following:
\begin{proposition}\label{20111005:prop2}
\begin{enumerate}[(a)]
\item
An element $A\in\Mat_{\ell\times\ell}\mc K((\partial^{-1}))$ is invertible if and only if $\det A\neq0$.
\item
Suppose that $A\in\Mat_{\ell\times\ell}\mc K((\partial^{-1}))$ has $\det A\neq0$.
Then $A\in\Mat_{\ell\times\ell}(\mc D)$ if and only if $A^{-1}\in\Mat_{\ell\times\ell}(\mc D)$.
\end{enumerate}
\end{proposition}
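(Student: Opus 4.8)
The plan is to reduce both statements to the reduction of $A$ to upper triangular form by elementary row operations, combined with the multiplicativity $\det(AB)=(\det A)(\det B)$ of the Dieudonn\'e determinant. As a preliminary bookkeeping remark I would record that every elementary row operation is realized by left multiplication by an invertible matrix: a row permutation by a permutation matrix (which is its own inverse), and the operation $\mc T(i,j;P)$ by the matrix $I-PE_{ji}$, whose inverse is $I+PE_{ji}$, where $E_{ji}$ is the matrix unit in position $(j,i)$. In particular such elementary matrices have nonzero Dieudonn\'e determinant, their inverses are again elementary, and all their entries lie in whatever subskewfield $P$ does.

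For part (a), the forward direction is immediate: if $A$ is invertible then $1=\det I=\det(AA^{-1})=(\det A)(\det A^{-1})$, forcing $\det A\neq0$. For the converse I would use Gauss elimination to write $EA=T$ with $E$ a product of elementary matrices and $T$ upper triangular, which is possible since $A$ is square. Multiplicativity gives that $\det A\neq0$ if and only if $\det T\neq0$, and by the defining formula for $\det$ on triangular matrices the latter holds precisely when every diagonal entry $T_{ii}$ is nonzero. Since $\mc K((\partial^{-1}))$ is a skewfield, each such $T_{ii}$ is then invertible, and an upper triangular matrix with invertible diagonal entries is invertible, as one sees by solving $Tx=b$ via back substitution. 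As $E$ is invertible as well, $A=E^{-1}T$ is invertible.

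For part (b), assume $\det A\neq0$, so that by (a) both $A$ and $A^{-1}$ are invertible. Suppose first that $A\in\Mat_{\ell\times\ell}\mc D$. The key observation is that, as recalled before the Proposition, any matrix with entries in $\mc D$ can be brought by elementary row operations \emph{over $\mc D$} to upper triangular form, since the reduction only involves additions, subtractions, multiplications, and divisions by nonzero pivots, all of which stay inside the subskewfield $\mc D$. Thus I can arrange $EA=T$ with $E$ and $T$ having entries in $\mc D$. Because $\det A\neq0$, the diagonal entries of $T$ are nonzero, hence invertible in $\mc D$, and back substitution (again staying in $\mc D$) shows $T^{-1}\in\Mat_{\ell\times\ell}\mc D$. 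The inverse of each elementary matrix is elementary over $\mc D$, so $E\in\Mat_{\ell\times\ell}\mc D$, whence $A^{-1}=T^{-1}E\in\Mat_{\ell\times\ell}\mc D$. The reverse implication follows by applying this to $A^{-1}$ in place of $A$, noting that $\det A^{-1}=(\det A)^{-1}\neq0$ and $(A^{-1})^{-1}=A$.

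I expect the only real care to be needed in part (a), in checking that passage to triangular form interacts correctly with the determinant: one must invoke the multiplicativity of $\det$ (which I may assume from \cite{Die43}) together with the fact that elementary matrices have nonzero determinant, so that vanishing or nonvanishing of $\det$ is preserved under the reduction. Everything else is a routine verification that the standard linear-algebra manipulations over a skewfield never leave the subskewfield $\mc D$.
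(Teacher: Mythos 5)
Your proposal is correct and follows essentially the same route as the paper: reduction to upper triangular form via elementary row operations (the paper writes $A=ET$, you write $EA=T$, which is equivalent), the observation that invertibility and nonvanishing of $\det$ are both equivalent to all diagonal entries of $T$ being nonzero, and for part (b) the remark that the whole reduction stays inside $\Mat_{\ell\times\ell}\mc D$. Your use of multiplicativity for the forward direction of (a) is a harmless minor variation on the paper's argument.
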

\begin{proof}
As noted above, by performing elementary row operations, 
we can write $A=ET$, 
where $E$ is product of elementary matrices, and $T$ is an upper triangular matrix.
Then $A$ is invertible in the algebra
$\Mat_{\ell\times\ell}\mc K((\partial^{-1}))$ if and only if $T$ is invertible,
and this happens if and only if all the diagonal entries 
$T_{11},\dots,T_{\ell\ell}$ are non zero.
On the other hand, we have $\det A=0$ if one of the $T_i$'s is zero,
while, otherwise, denoting by $n_i$ the order of $T_{ii}$ and by $t_i$ its leading coefficient,
we have
$\det A=\pm t_1\dots t_\ell\lambda^{n_1+\dots+n_\ell}\neq0$.
This proves part (a).
For part (b) it sufficies to note that if $A$ has entries in $\mc D$, so do $E$ and $T$,
and, therefore, $A^{-1}$.
\end{proof}
\pecettaold{
It is not clear how to define \emph{rational matrices} with entries in $\mc A((\partial^{-1}))$,
where $\mc A$ is an arbitrary differential ring, or even a domain.
Posibilities:
\begin{enumerate}[(i)]
\item 
if $\mc A$ is a domain, with field of fractions $\mc K$, we can take $\Mat_{\ell\times\ell}\mc K(\partial)$,
\item
we could take the smallets subring of $\Mat_{\ell\times\ell}\mc A((\partial^{-1}))$
generated by $\Mat_{\ell\times\ell}\mc A[\partial]$
and the inverse of matrices $S\in\Mat_{\ell\times\ell}\mc A[\partial]$
which are invertible in $\Mat_{\ell\times\ell}\mc A((\partial^{-1}))$.
\end{enumerate}
It is not clear how these definitions are related 
(basically because we don't have a Dieudonn\`e determinant, and we don't have Ore condition).

CONJECTURE: for a domain the above two notions are related by:
$$
\text{def.}(ii)=\text{def.}(ii)\cap\Mat{}_{\ell\times\ell}\mc A((\partial^{-1}))\,.
$$
}
\begin{remark}\label{20111007:rem1}
Suppose $\mc A$ is a differential domain with field of fractions $\mc K$.
Let $A\in\Mat_{\ell\times\ell} \mc A[\partial]$.
By Proposition \ref{20111005:prop2}, if $\det A\neq0$,
then $A^{-1}$ exists and it has entries in $\Mat_{\ell\times\ell} \mc K((\partial^{-1}))$.
In fact, 
it is clear from the proof that its entries lie in $\tilde{\mc A}((\partial^{-1}))$,
where $\tilde{\mc A}$ is an extension of $\mc A$ obtained by adding inverses of finitely many non-zero elements.
\end{remark}
\begin{definition}\label{20111208:def1}
Let $A=\big(A_{ij}\big)_{i,j=1}^\ell\in\Mat_{\ell\times\ell}\mc K((\partial^{-1}))$.
The \emph{total order} of $A$ is defined as
$$
\tord(A)=\max_{\sigma\in S_\ell} \Big(\sum_{i=1}^{\ell} \ord(A_{i,\sigma(i)})\Big)\,,
$$
where $S_\ell$ denotes the group of permutations of $\{1,\dots,\ell\}$.
Assuming $\det A\neq0$, we define the \emph{degeneracy degree} of $A$ as 
$$
\dd(A)=\tord(A)-\d(A)\,,
$$
(by Theorem \ref{20111208:thm1}(i) below, $dd(A)$ is a non-negative integer),
and we say that $A$ is \emph{strongly non-degenerate} if $\dd(A)=0$.
\end{definition}
%
%\begin{example}\label{20111210:ex1}
%Consider the matrix 
%$\left(\begin{array}{cc} 1 & a \\ \partial & a\partial \end{array}\right)$,
%where $a\in\mc K\backslash\mc C$.
%Note that, even though the second row is obtained by multiplying the first row on the right by $\partial$,
%this matrix has non-zero Dieudonn\`e determinant: $\det A=-a'$.
%In particular, $\deg_\lambda\det A=0$.
%On the other hand, we clearly have $\tord A=1$.
%Therefore this matrix is non-degenerate but not strongly non-degenerate.
%\end{example}
%
\begin{definition}\label{20111207:def1}
Let $A=\big(A_{ij}\big)_{i,j=1}^\ell\in\Mat_{\ell\times\ell}\mc K((\partial^{-1}))$.
A system of integers $(N_{1},...,N_{\ell},h_{1},...,h_{\ell})$ 
is called a \emph{majorant} of $A$
if 
$$
\ord(A_{ij})\leq N_j - h_i
\,\,\text{ for every } i,j=1,\dots,\ell\,.
$$ 
The \emph{characteristic matrix}
$\bar A(\lambda)=\big(\bar A_{ij}(\lambda)\big)_{i,j=1}^\ell\in\Mat_{\ell\times\ell}\mc K[\lambda^{\pm1}]$,
associated to the majorant $\{N_i,h_i\}_{i=1}^\ell$,
is defined by 
$$
\bar A_{ij}(\lambda)=a_{ij;N_j-h_i}\lambda^{N_j-h_i}\,,
$$
where $a_{ij;N_j-h_i}$ is the coefficient of $\partial^{N_j-h_i}$ in $A_{ij}$.
Clearly, 
\begin{equation}\label{20111209:eq1}
\sum_{i=1}^{n} ({N_{i}}-{h_{i}})\geq\tord(A)
\end{equation}
for every majorant $\{N_i,h_i\}_{i=1}^\ell$ of $A$.
A majorant is called \emph{optimal} if 
$$
\sum_{i=1}^{n} ({N_{i}}-{h_{i}})=\tord(A)\,.
$$
\end{definition}
The following theorem follows from the results in \cite{Huf65}.
\begin{theorem}\label{20111208:thm1}
Let  $A=\big(A_{ij}\big)_{i,j=1}^\ell\in\Mat_{\ell\times\ell}\mc K((\partial^{-1}))$
with $\det A\neq0$.
We have:
\begin{enumerate}[(i)]
\item
$\dd(A)\geq 0$;
\item
if $\det A\neq0$, then there exists an optimal majorant of $A$;
\item
if $\dd(A)\geq1$, then $\det(\bar A(\lambda))=0$ for any majorant;
\item
if $\dd(A)=0$, then $\det(\bar A(\lambda))=0$ for any majorant which is not optimal,
and $\det(\bar A(\lambda))=\det A$ for any majorant which is optimal.
\end{enumerate}
\end{theorem}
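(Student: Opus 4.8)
The plan is to reduce all four statements to a single computation of the ordinary (commutative) determinant of the characteristic matrix, carried out after conjugating $A$ by diagonal matrices of powers of $\partial$.

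First I would record the shape of $\det(\bar A(\lambda))$. Since $\lambda$ commutes with $\mc K$, factoring the powers $\lambda^{-h_i}$ and $\lambda^{N_j}$ out of the rows and columns gives
\[
\det(\bar A(\lambda)) = \det(C)\,\lambda^{\sum_{i}(N_i-h_i)}, \qquad C=(a_{ij;N_j-h_i})_{i,j=1}^\ell\in\Mat_{\ell\times\ell}\mc K.
\]
Expanding $\det(C)=\sum_{\sigma}\sign(\sigma)\prod_i a_{i\sigma(i);N_{\sigma(i)}-h_i}$, a permutation $\sigma$ contributes a non-zero term only if $\ord(A_{i\sigma(i)})=N_{\sigma(i)}-h_i$ for all $i$; summing over $i$ this forces $\sum_i\ord(A_{i\sigma(i)})=\sum_i(N_i-h_i)$. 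Combined with $\sum_i\ord(A_{i\sigma(i)})\le\tord(A)$ and \eqref{20111209:eq1}, every such $\sigma$ must realize $\tord(A)$ and the majorant must be optimal. In particular $\det(\bar A(\lambda))=0$ for every non-optimal majorant, which already disposes of the non-optimal cases in (iii) and (iv). For (ii), the inequalities $N_j-h_i\ge\ord(A_{ij})$ are exactly the dual feasibility (potential) conditions for the assignment problem whose optimum is $\tord(A)$, so an integer optimal majorant exists by LP (K\"onig--Egerv\'ary) duality; here $\det A\neq0$ is used, via the Frobenius--K\"onig theorem, to ensure some permutation has all entries non-zero (otherwise a $p\times q$ zero block with $p+q=\ell+1$ would confine $p$ rows to a right-subspace of dimension $p-1$, making them left-dependent and contradicting Proposition \ref{20111005:prop2}(a)), so that $\tord(A)$ is finite.

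Fix now an optimal majorant and set $B = D_h\circ A\circ D_N$ with $D_h=\mathrm{diag}(\partial^{h_i})$ and $D_N=\mathrm{diag}(\partial^{-N_j})$. The majorant condition gives $\ord(B_{ij})=\ord(A_{ij})-(N_j-h_i)\le0$, so every entry of $B$ lies in $R=\{P\in\mc K((\partial^{-1}))\mid\ord P\le0\}=\mc K[[\partial^{-1}]]$; moreover the $\partial^0$-coefficient of $B_{ij}$ is exactly $a_{ij;N_j-h_i}$, i.e.\ $\epsilon(B)=C$, where $\epsilon\colon R\to\mc K$ sends an operator to its $\partial^0$-coefficient. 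From \eqref{20111130:eq1} one checks that $\epsilon$ is a ring homomorphism, extended entrywise to $\Mat_{\ell\times\ell}R\to\Mat_{\ell\times\ell}\mc K$. Using multiplicativity of the Dieudonn\'e determinant together with $\det D_h=\lambda^{\sum h_i}$, $\det D_N=\lambda^{-\sum N_j}$ and optimality $\sum_i(N_i-h_i)=\tord(A)$, I get $\det B=(\det{}_1A)\,\lambda^{\d(A)-\tord(A)}=(\det{}_1A)\,\lambda^{-\dd(A)}$; hence $\det{}_1B=\det{}_1A$ and $\d(B)=-\dd(A)$.

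The heart of the argument, and the step I expect to be the main obstacle, is a structural lemma about $B$ over $R$. Note $R$ is a two-sided discrete valuation ring with uniformizer $\partial^{-1}$, maximal ideal $\{\ord\le-1\}$ and residue field $\mc K=R/\ker\epsilon$, its units being precisely the order-$0$ operators. By Gaussian elimination over $R$, always pivoting on the entry of maximal order in the current column (so that each multiplier $B_{i1}B_{11}^{-1}$ again has order $\le0$ and lies in $R$), one brings $B$ to an upper-triangular $T$ over $R$ with $B=ET$, $E$ a product of elementary and permutation matrices over $R$. Then $\d(B)=\sum_i\ord(T_{ii})\le0$, which with $\d(B)=-\dd(A)$ proves (i). Applying $\epsilon$ gives $\epsilon(B)=\epsilon(E)\epsilon(T)$ with $\epsilon(T)$ upper-triangular of diagonal $\epsilon(T_{ii})$, so $\det(C)=\det\epsilon(B)=\pm\prod_i\epsilon(T_{ii})$; since $\epsilon(T_{ii})\neq0$ iff $\ord(T_{ii})=0$, we get $\det(C)\neq0\iff\d(B)=0$, and in that case $\det{}_1B=\pm\prod_i(\text{leading coefficient of }T_{ii})=\det(C)$. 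Combining: if $\dd(A)\ge1$ then $\d(B)\le-1$, so $\det(C)=0$ and $\det(\bar A(\lambda))=0$ for the optimal majorant, giving (iii) with the non-optimal case above; if $\dd(A)=0$ then $\d(B)=0$, so $\det(C)=\det{}_1B=\det{}_1A$ and $\d(A)=\tord(A)$, whence $\det(\bar A(\lambda))=(\det{}_1A)\,\lambda^{\tord(A)}=\det A$ for every optimal majorant, giving (iv). The delicate points to verify are that the max-order pivoting rule keeps the elimination inside $R$, and that $\epsilon$ is compatible both with this reduction and with the extraction of $\det{}_1$ from the Dieudonn\'e determinant.
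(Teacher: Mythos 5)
Your argument is correct and complete, but it cannot be ``the same as the paper's'': the paper gives no proof of Theorem \ref{20111208:thm1} at all, deferring entirely to Hufford \cite{Huf65} (whose method goes back to Leray's characteristic matrices). What you supply is a self-contained substitute, and its two main ingredients both check out. For (ii), identifying majorants with feasible dual solutions of the assignment problem for the order matrix, and invoking integral LP duality, does produce an optimal majorant; your use of Frobenius--K\"onig plus Proposition \ref{20111005:prop2}(a) to rule out $\tord(A)=-\infty$ is exactly what is needed (a $p\times q$ zero block with $p+q=\ell+1$ forces a left dependence of rows over the skewfield, hence $\det A=0$). For (i), (iii), (iv), the conjugation $B=D_h\circ A\circ D_N$ lands in $\Mat_{\ell\times\ell}R$ with $R=\mc K[[\partial^{-1}]]$, the max-order pivoting rule does keep Gaussian elimination inside $R$ (each multiplier $B_{i1}B_{11}^{-1}$ has order $\le 0$), and the $\partial^0$-coefficient map $\epsilon:R\to\mc K$ is indeed a ring homomorphism by \eqref{20111130:eq1} (in $\partial^{n}\circ a$ with $n\le 0$ only the $j=0$ term can contribute to $\partial^{0}$ when multiplied by order-$\le0$ factors), with $\epsilon(B)$ equal to the matrix $C$ of characteristic coefficients because the majorant inequality forces $k=0$, $n=N_j-h_i$ in the expansion of $\partial^{h_i}A_{ij}\partial^{-N_j}$. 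The sign bookkeeping between $\det{}_1B$ and $\det(C)$ also matches, since the permutation signs in $\det E$ and $\det\epsilon(E)$ coincide. What your route buys over the citation is a uniform mechanism: all four statements reduce to the single identity $\d(B)=\sum_i\ord(T_{ii})\le 0$ in the local ring $R$, together with the observation that $\det(C)\neq0$ exactly when all these orders vanish. The one cosmetic remark is that you should state explicitly that $\det B\neq 0$ (which follows from $\det A\neq0$ and multiplicativity) is what guarantees that no pivot column degenerates and that all $T_{ii}$ are non-zero; you use this implicitly when writing $\d(B)=\sum_i\ord(T_{ii})$.
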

In the special case when 
$A$ is an $\ell\times\ell$ matrix pseudodifferential operator of order $N$
with invertible leading coefficient $A_N\in\Mat_{\ell\times\ell} \mc K$,
we can take the (optimal) majorant $N_i=N,\,h_i=0,\,i=1,\dots,\ell$.
The corresponding characteristic matrix is $\bar A(\lambda)=A_N\lambda^N$.
We thus obtain the following
\begin{corollary}\label{20111005:prop1}
If $A$ is an $\ell\times\ell$ matrix pseudodifferential operator of order $N$
with invertible leading coefficient $A_N\in\Mat_{\ell\times\ell} \mc K$, then
$\det A=(\det A_N) \lambda^{N\ell}$.
\end{corollary}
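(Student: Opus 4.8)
The plan is to read the statement off Theorem \ref{20111208:thm1}(iv), applied to the particular majorant $N_i=N$, $h_i=0$ singled out in the paragraph preceding the corollary. Before invoking the theorem, however, I must secure its standing hypothesis $\det A\neq0$. For this I would first check that $A$ is invertible in $\Mat_{\ell\times\ell}\mc K((\partial^{-1}))$: since $A_N$ is an invertible constant matrix, one can write $A=A_N\circ(\id_\ell+X)\circ\partial^N$, where $X=A_N^{-1}\circ A\circ\partial^{-N}-\id_\ell$ has order $\le-1$; the factor $\id_\ell+X$ is inverted by the geometric series $\sum_{k\ge0}(-X)^k$ exactly as in \eqref{20111130:eq2}, while $A_N$ and $\partial^N$ are manifestly invertible. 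Hence $A$ is invertible, and Proposition \ref{20111005:prop2}(a) gives $\det A\neq0$, so the theorem applies.

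Next I would verify that the prescribed data is an admissible majorant and compute its characteristic matrix. Since $A$ has order $N$, every entry satisfies $\ord(A_{ij})\le N=N_j-h_i$, so $\{N_i=N,h_i=0\}$ is a majorant, and its characteristic matrix is $\bar A_{ij}(\lambda)=(A_N)_{ij}\lambda^N$, i.e. $\bar A(\lambda)=A_N\lambda^N$. As $\lambda$ commutes with $\mc K$, the matrix $\bar A(\lambda)$ lives over the commutative ring $\mc K[\lambda^{\pm1}]$, so its determinant is the ordinary one, $\det\bar A(\lambda)=(\det A_N)\lambda^{N\ell}$, which is non-zero precisely because $A_N$ is invertible.

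It then remains to confirm the two conditions under which part (iv) yields $\det A=\det\bar A(\lambda)$, namely $\dd(A)=0$ and optimality of the chosen majorant. For the first: by part (i) we have $\dd(A)\ge0$; and were $\dd(A)\ge1$, part (iii) would force $\det\bar A(\lambda)=0$ for \emph{every} majorant, contradicting the computation above — hence $\dd(A)=0$. For optimality I would argue directly: each $\ord(A_{ij})\le N$ gives $\tord(A)\le N\ell$, while expanding $\det A_N=\sum_{\sigma}\sign(\sigma)\prod_i(A_N)_{i\sigma(i)}\neq0$ produces a permutation $\sigma$ with all $(A_N)_{i\sigma(i)}\neq0$, so $\ord(A_{i\sigma(i)})=N$ and $\tord(A)=N\ell=\sum_i(N_i-h_i)$. (Optimality is in fact automatic once $\dd(A)=0$, since a non-optimal majorant would give $\det\bar A(\lambda)=0$ by part (iv).) Applying part (iv) now yields $\det A=\det\bar A(\lambda)=(\det A_N)\lambda^{N\ell}$, as claimed.

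I do not expect a real obstacle here, since the corollary is essentially a direct specialization of Theorem \ref{20111208:thm1}(iv). The only points demanding care are the two preliminary reductions: establishing $\det A\neq0$ through the invertibility of $A$ (which rests on the invertibility of $A_N$), and, before applying part (iv), verifying $\dd(A)=0$ — a condition that is not among the corollary's hypotheses but follows from parts (i) and (iii) together with the non-vanishing of $\det\bar A(\lambda)$.
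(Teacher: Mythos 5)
Your proposal is correct and follows essentially the same route as the paper, which simply observes that $\{N_i=N,\,h_i=0\}$ is an optimal majorant with characteristic matrix $\bar A(\lambda)=A_N\lambda^{N}$ and invokes Theorem \ref{20111208:thm1}. The extra steps you supply --- verifying $\det A\neq0$ via the geometric-series inverse, confirming optimality from a permutation with all $(A_N)_{i\sigma(i)}\neq0$, and deducing $\dd(A)=0$ from parts (i) and (iii) --- are exactly the details the paper leaves implicit, and they are all sound.
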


The proof of the following result is similar to that in 
in \cite{SK75} and \cite{Miy83} in the case when $\mc A$ is the algebra of holomorphic functions
in a domain of the complex plain.
\begin{theorem}\label{20111208:thm2}
Let $\mc A$ be a unital differential subring of the differential field $\mc K$,
and let $\bar{\mc A}$ be its integral closure in $\mc K$.
%and assume that there exists a family $\{\mc B_s\}_{s\in\mc S}$ of unital differential subrings of $\mc K$
%containing $\mc A$ such that the following two properties hold:
%\begin{enumerate}[(i)]
%\item
%$\bigcap_{s\in\mc S}\mc B_s=\mc A$
%\item[($ii$)$_\ell$]
%for every $s\in\mc S$ and $(f_1,\dots,f_\ell)\in (\mc K^\ell\backslash\{0\})$
%there exists $i\in\{1,\dots,\ell\}$ such that $f_i\neq0$ and $\frac{f_j}{f_i}\in\mc B_s$ for all $j=1,\dots,\ell$.
%\end{enumerate}
Then, for any $A\in\Mat_{\ell\times\ell}\mc A((\partial^{-1}))$ we have $\det_1A\in\bar{\mc A}$.
\end{theorem}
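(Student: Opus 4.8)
The plan is to prove the statement by a valuative criterion followed by an induction on the degeneracy degree. Recall that the integral closure $\bar{\mc A}$ of $\mc A$ in $\mc K$ is the intersection of all valuation rings $\mc O$ of $\mc K$ containing $\mc A$. Since $\det{}_1 A\in\mc K$ by definition, it therefore suffices to fix an arbitrary valuation ring $\mc O\supseteq\mc A$ of $\mc K$, with valuation $v$, and to show that $\det{}_1 A\in\mc O$. We may assume $\det A\neq0$, since otherwise $\det{}_1A=0$. As all coefficients of all entries of $A$ lie in $\mc A\subseteq\mc O$, we view $A\in\Mat_{\ell\times\ell}\mc O((\partial^{-1}))$, and the claim to be established, for each such $\mc O$, becomes: \emph{if $B\in\Mat_{\ell\times\ell}\mc O((\partial^{-1}))$ has $\det B\neq0$, then $\det{}_1B\in\mc O$}. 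I would prove this by induction on $\dd(B)$.

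\textbf{Base case.} For $\dd(B)=0$ I would use the characteristic matrix. Choose an optimal majorant $\{N_j,h_i\}$ of $B$ (which exists by Theorem \ref{20111208:thm1}(ii)) and form $\bar B(\lambda)$. By Theorem \ref{20111208:thm1}(iv) we have $\det\bar B(\lambda)=\det B=(\det{}_1B)\lambda^{\d(B)}$. But $\det\bar B(\lambda)$ is the ordinary determinant of a matrix whose entries $b_{ij;N_j-h_i}\lambda^{N_j-h_i}$ have coefficients among those of the entries of $B$, so it lies in $\mc O[\lambda^{\pm1}]$; comparing the coefficient of $\lambda^{\d(B)}$ gives $\det{}_1B\in\mc O$.

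\textbf{Inductive step.} Suppose $\dd(B)\geq1$ and again fix an optimal majorant. Writing $M=\big(b_{ij;N_j-h_i}\big)_{i,j=1}^\ell\in\Mat_{\ell\times\ell}\mc O$ for the matrix of top coefficients and extracting from $\bar B(\lambda)$ the scalar monomials $\lambda^{-h_i}$ (rows) and $\lambda^{N_j}$ (columns), we get $\det\bar B(\lambda)=\lambda^{\tord(B)}\det M$, so Theorem \ref{20111208:thm1}(iii) forces $\det M=0$. Hence the rows of $M$ are linearly dependent over $\mc K$; since $\mc O$ is a valuation ring, dividing a nontrivial relation by its coefficient of least valuation yields $\sum_i c_i b_{ij;N_j-h_i}=0$ for all $j$, with all $c_i\in\mc O$ and $c_{i_0}=1$ for some $i_0$. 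I would then replace row $i_0$ of $B$ by $\sum_i c_i\,\partial^{\,h_i-h_{i_0}}\,(\text{row }i)$, i.e.\ apply the composition of the elementary operations $\mc T(i,i_0;-c_i\partial^{h_i-h_{i_0}})$ for $i\neq i_0$, obtaining $B'$. By formula \eqref{20111130:eq1}, left multiplication by $\partial^{h_i-h_{i_0}}$ preserves leading coefficients, so the coefficient of $\partial^{N_j-h_{i_0}}$ in the new $(i_0,j)$ entry is $\sum_i c_i b_{ij;N_j-h_i}=0$. Thus $\{N_j,h_i'\}$ with $h_{i_0}'=h_{i_0}+1$ (and $h_i'=h_i$ otherwise) is a majorant of $B'$, and \eqref{20111209:eq1} gives $\tord(B')\leq\tord(B)-1$. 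Since elementary operations preserve the Dieudonn\'e determinant, $\det{}_1B'=\det{}_1B$ and $\d(B')=\d(B)$, so $0\leq\dd(B')\leq\dd(B)-1$ by Theorem \ref{20111208:thm1}(i).

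\textbf{The crux.} The delicate point that closes the induction is that $B'$ still has all coefficients in $\mc O$. This holds because the combining scalars $c_i$ lie in $\mc O$ while each $\partial^{h_i-h_{i_0}}\circ B_{ij}$ again has coefficients in $\mc A\subseteq\mc O$; here it is essential that $\mc A$ is a \emph{differential} subring, so that the derivatives of coefficients produced by \eqref{20111130:eq1} remain in $\mc A$. Note, by contrast, that the dependence relation among the rows of $M$ cannot in general be normalized with coefficients in $\mc A$ itself — this is precisely what forces the passage to a valuation ring, where the total ordering of values lets us clear denominators while keeping a unit coefficient. Granting this, the induction hypothesis applied to $B'$ gives $\det{}_1A=\det{}_1B'\in\mc O$; as $\mc O$ was arbitrary, $\det{}_1A\in\bar{\mc A}$, completing the proof.
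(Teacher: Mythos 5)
Your proof is correct and follows essentially the same route as the paper: reduce to an arbitrary valuation ring $\mc O\supseteq\mc A$ via Proposition \ref{20111210:prop1}, then induct on $\dd$, using a left null vector of the characteristic matrix (normalized inside the valuation ring via Proposition \ref{20111210:prop2}) to build a determinant-preserving row modification that drops the degeneracy degree, with the base case read off from the characteristic matrix of an optimal majorant. The only difference is cosmetic: you realize the reduction step by elementary operations $\mc T(i,i_0;-c_i\partial^{h_i-h_{i_0}})$ that change only one row, whereas the paper multiplies by the matrix $P$ of \eqref{20111214:eq1}, which also rescales the other rows by powers of $\partial$ and so requires tracking $\d(P)$; both versions rely on exactly the same bookkeeping of majorants and on the coefficients remaining in $\mc O$ through the iteration.
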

Let $\mc B$ be a valuation ring in $\mc K$ containing $\mc A$.
By Proposition \ref{20111210:prop1}, it sufficies to prove that $\det_1A\in\mc B$.
This follows from the following two lemmas.
\begin{lemma}\label{20111208:lem1}
Suppose that $A\in\Mat_{\ell\times\ell} \mc B((\partial^{-1}))$
has $\det A\neq0$ and degeneracy degree $\dd(A)\geq1$.
Then there exists a matrix 
$P \in \Mat_{\ell\times\ell} \mc B((\partial^{-1}))$
such that $\det_1P=1$ and $\dd(PA)\leq\dd(A)-1$.
\end{lemma}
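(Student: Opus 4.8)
The plan is to leverage Theorem \ref{20111208:thm1}(iii): since $\dd(A)\geq1$, the characteristic matrix of $A$ relative to any majorant is singular as an ordinary matrix. First I would fix an \emph{optimal} majorant $\{N_i,h_i\}_{i=1}^\ell$, which exists by Theorem \ref{20111208:thm1}(ii), and form the scalar leading-coefficient matrix $C=\big(a_{ij;N_j-h_i}\big)_{i,j=1}^\ell\in\Mat_{\ell\times\ell}\mc B$. Since $\bar A(\lambda)=\diag(\lambda^{-h_i})\,C\,\diag(\lambda^{N_j})$, one computes $\det\bar A(\lambda)=(\det C)\lambda^{\tord(A)}$, so the vanishing of $\det\bar A(\lambda)$ forces $\det C=0$. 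Hence the rows of $C$ are dependent over $\mc K$: there is a non-zero $(c_1,\dots,c_\ell)\in\mc K^\ell$ with $\sum_{i=1}^\ell c_iC_{ij}=0$ for all $j$. The essential use of the hypothesis that $\mc B$ is a \emph{valuation} ring is now to pick, among the non-zero $c_i$, one of minimal valuation, say $c_k$, and rescale the relation by $c_k^{-1}$; this makes $c_k=1$ and $c_i=c_i/c_k\in\mc B$ for every $i$, while preserving $\sum_i c_iC_{ij}=0$. (Over a general ring this normalization is exactly what could fail.)

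Next I would define $P\in\Mat_{\ell\times\ell}\mc B((\partial^{-1}))$ to be the identity matrix with its $k$-th row replaced by $P_{ki}=c_i\partial^{h_i-h_k}$, $i=1,\dots,\ell$ (so $P_{kk}=c_k=1$). Each entry lies in $\mc B((\partial^{-1}))$ because $c_i\in\mc B$. Since only the $k$-th row differs from the identity, the off-diagonal entries $P_{ki}$ ($i\neq k$) can be cleared by the elementary operations $\mc T(i,k;P_{ki})$ using the standard rows $e_i$, leaving $\diag(1,\dots,c_k,\dots,1)$; hence $\det P=c_k\lambda^0$, that is $\det{}_1P=1$ and $\d(P)=0$. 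In particular $P$ is invertible, $\det(PA)=\det(P)\det(A)\neq0$, and $\d(PA)=\d(P)+\d(A)=\d(A)$.

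It then remains to show $\tord(PA)\leq\tord(A)-1$. For $i\neq k$ the rows are unchanged, so $\ord((PA)_{ij})=\ord(A_{ij})\leq N_j-h_i$. For the $k$-th row, $(PA)_{kj}=\sum_i c_i\,\partial^{h_i-h_k}\circ A_{ij}$, and $\ord(\partial^{h_i-h_k}\circ A_{ij})\leq(h_i-h_k)+(N_j-h_i)=N_j-h_k$, whence $\ord((PA)_{kj})\leq N_j-h_k$. The central computation is the coefficient of $\partial^{N_j-h_k}$ in $(PA)_{kj}$: expanding $\partial^{h_i-h_k}\circ a\partial^m$ by \eqref{20111130:eq1}, the only term reaching order $N_j-h_k$ arises from the top coefficient $a_{ij;N_j-h_i}=C_{ij}$ with no derivative applied, so that coefficient is $\sum_i c_iC_{ij}=0$. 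Thus $\ord((PA)_{kj})\leq N_j-h_k-1$ for every $j$, so $\{N_i,h_i'\}$ with $h_k'=h_k+1$ and $h_i'=h_i$ $(i\neq k)$ is a majorant of $PA$. By \eqref{20111209:eq1} and optimality of $\{N_i,h_i\}$ this gives $\tord(PA)\leq\sum_i(N_i-h_i')=\tord(A)-1$, and combining with $\d(PA)=\d(A)$ yields $\dd(PA)\leq\dd(A)-1$, as required.

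I expect the monomial bookkeeping in the last paragraph to be the main technical obstacle: one must check that raising the order of row $i$ by exactly $h_i-h_k$ aligns all leading terms of the combined row at the single order $N_j-h_k$ in column $j$, and that no lower-order Leibniz corrections contribute to that top coefficient, so that the relation $\sum_i c_iC_{ij}=0$ cleanly annihilates it. The conceptual crux, by contrast, is the valuation-ring normalization producing a dependence vector lying in $\mc B$ with a \emph{unit} entry, which is precisely what guarantees $\det{}_1P=1$ and $P\in\Mat_{\ell\times\ell}\mc B((\partial^{-1}))$ at the same time.
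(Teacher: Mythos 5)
Your proof is correct and follows essentially the same route as the paper: take an optimal majorant, observe via Theorem \ref{20111208:thm1}(iii) that the scalar matrix $\bar A(1)$ is singular, normalize a left null vector inside the valuation ring $\mc B$ so that one entry equals $1$, and multiply by a matrix $P$ whose distinguished row is $\big(f_j\partial^{h_j-h_k}\big)_j$ so that the top coefficient in that row of $PA$ is killed by the null relation. The only (harmless) difference is your choice of $P$: the paper also rescales the remaining rows by $\partial^{h_j-h_\ell}$, giving $\d(P)=\sum_j(h_j-h_\ell)$, whereas your $P$ keeps them as identity rows so that $\d(P)=0$ and the order bookkeeping is marginally cleaner.
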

\begin{proof}
By Theorem \ref{20111208:thm1}(ii), there exists an optimal majorant $\{N_i,h_i\}_{i=1}^\ell$ for $A$.
%Multiplying, if necessary, $A$ on the left by a permutation matrix $\Sigma$
%we may (and we will) assume that $h_1\geq\dots\geq h_\ell$.
%(Note that $\Sigma A$ and $A$ have the same total order,
%and at the end the matrix $P$ will have to be replaced by $P\Sigma$,
%which still lies in $\Mat_{\ell\times\ell} \mc B_s[\partial]$,
%and whose determinant differs from $\det P$ by a sign.)
%
Since, by assumption, $\dd(A)\geq1$,
by Theorem \ref{20111208:thm1}(iii), the characteristic matrix $\bar A(\lambda)$ associated to
this optimal majorant is degenerate
(and so is $\bar A(1)\in\Mat_{\ell\times\ell}\mc K$).
Let $(f_1,\dots,f_\ell)\in\mc K^\ell$ be a left eigenvector of $\bar A(1)$ 
with eigenvalue $0$.
Since $\mc B$ is a valuation ring, condition $(A_\ell)$ in Proposition \ref{20111210:prop2} 
of the Appendix holds.
Hence, after dividing all the entries $f_j$ by a non-zero entry $f_i$,
we may assume that $f_j\in\mc B$ for all $j=1,\dots,\ell$ and $f_i=1$ for some $i$.
Consider the following matrix 
\begin{equation}\label{20111214:eq1}
P=
\left(\begin{array}{lllll}
\partial^{h_1-h_\ell} & 0 & \dots & 0 & 0 \\
0 & \partial^{h_2-h_\ell} & \dots & 0 & 0 \\
& \vdots &  & \vdots & \\
f_1\partial^{h_1-h_\ell} & f_2\partial^{h_2-h_\ell} & \dots & f_{\ell-1}\partial^{h_{\ell-1}-h_\ell} & f_\ell \\
& \vdots &  & \vdots & \\
0 & 0 & \dots & \partial^{h_{\ell-1}-h_\ell} & 0 \\
0 & 0 & \dots & 0 & 1 \\
\end{array}\right)
\begin{array}{lllll}
\\ \\ \text{row } i \\ \\ \\
\end{array}
\end{equation}
Note that $P\in\Mat_{\ell\times\ell} \mc B((\partial^{-1}))$,
it is stronlgy non-degenerate 
(since the characteristic matrix associated to the majorant $\{h_j-h_\ell,0\}_{j=1}^\ell$
is non-degenerate) and its Dieudonn\`e determinant is 
$$
\det P=\lambda^{\sum_j(h_j-h_\ell)}\,,
$$
i.e. $\det_1P=1$ and $\d(P)=\sum_{j=1}^\ell(h_j-h_\ell)$.
We claim that the following is a majorant for the matrix $PA$:
\begin{equation}\label{20111209:eq2}
\{N_j,h_\ell+\delta_{j,i}\}_{j=1}^\ell\,.
\end{equation}
Indeed, for $j\neq i$ we have
$$
\ord(PA)_{jk}
=\ord(\partial^{h_j-h_\ell}A_{jk})
=h_j-h_\ell+\ord(A_{jk})
\leq 
%h_j-h_\ell+N_k-h_j=
N_k-h_\ell\,.
$$
For the entries in the $i$-th row of the matrix $PA$ we have
$$
(PA)_{ik}=\sum_{j=1}^\ell f_j\partial^{h_j-h_\ell} A_{jk}\,,
$$
which has order less then or equal to $\max_j\{h_j-h_\ell+\ord(A_{jk})\}\leq N_k-h_\ell$,
and the coefficient of $\partial^{N_k-h_\ell}$ in $(PA)_{ik}$ is
$$
\sum_{j=1}^\ell f_j a_{jk;N_k-h_j}
=\sum_{j=1}^\ell f_j \bar A(1)_{jk}=0
\,.
$$
Hence, 
$$
\ord(PA)_{ik}\leq N_k-(h_\ell+1)\,,
$$
as we wanted.
By \eqref{20111209:eq1} (applied to the majorant \eqref{20111209:eq2} of $PA$), we have
$$
\begin{array}{l}
\displaystyle{
\tord(PA)\leq \sum_{j=1}^\ell (N_j-h_\ell-\delta_{j,i})
} \\
\displaystyle{
=\sum_{j=1}^\ell(N_j-h_j)+\sum_{j=1}^\ell(h_j-h_\ell)-1
=\tord(A)+\d(P)-1\,.
}
\end{array}
$$
The claim follows.
\end{proof}
\begin{lemma}\label{20111208:lem2}
If $A\in\Mat_{\ell\times\ell} \mc B((\partial^{-1}))$
has non-zero Dieudonn\`e determinant,
then there exists a matrix 
$P \in \Mat_{\ell\times\ell} \mc B((\partial^{-1}))$
such that $\det_1P=1$ and $PA$ is strongly non-degenerate.
\end{lemma}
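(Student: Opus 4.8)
The plan is to iterate the one-step reduction of Lemma \ref{20111208:lem1} by induction on the degeneracy degree $\dd(A)$, which by Theorem \ref{20111208:thm1}(i) is a non-negative integer whenever $\det A\neq0$. The base case is $\dd(A)=0$: here $A$ is already strongly non-degenerate, so I would take $P$ to be the identity matrix, which lies in $\Mat_{\ell\times\ell}\mc B((\partial^{-1}))$ and satisfies $\det_1 P=1$.

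For the inductive step, suppose $\dd(A)=n\geq1$ and that the assertion has been established for every matrix in $\Mat_{\ell\times\ell}\mc B((\partial^{-1}))$ with non-zero Dieudonn\'e determinant and degeneracy degree at most $n-1$. By Lemma \ref{20111208:lem1} there exists $P_1\in\Mat_{\ell\times\ell}\mc B((\partial^{-1}))$ with $\det_1 P_1=1$ and $\dd(P_1 A)\leq n-1$. Before invoking the inductive hypothesis on $P_1 A$, I must check that $\det(P_1 A)\neq0$, so that $\dd(P_1 A)$ is genuinely defined: by the multiplicativity $\det(P_1 A)=(\det P_1)(\det A)$, and since $\det_1 P_1=1$ we have $\det P_1=\lambda^{\d(P_1)}\neq0$, whence $\det(P_1 A)\neq0$. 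Moreover $P_1 A\in\Mat_{\ell\times\ell}\mc B((\partial^{-1}))$, since this matrix algebra is closed under products. The inductive hypothesis then supplies $P_2\in\Mat_{\ell\times\ell}\mc B((\partial^{-1}))$ with $\det_1 P_2=1$ such that $P_2(P_1 A)$ is strongly non-degenerate. Setting $P=P_2 P_1$ completes the step: $PA=P_2 P_1 A$ is strongly non-degenerate, $P$ has entries in $\mc B((\partial^{-1}))$, and $\det_1$ is multiplicative. Indeed, writing $\det=\det_1\cdot\lambda^{\d}$ and comparing both the coefficient and the exponent of $\lambda$ on the two sides of $\det(P_2 P_1)=(\det P_2)(\det P_1)$ gives $\d(P_2 P_1)=\d(P_2)+\d(P_1)$ and $\det_1(P_2 P_1)=(\det_1 P_2)(\det_1 P_1)=1$, using that $\mc K$ is commutative.

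The entire substance of the argument is carried by Lemma \ref{20111208:lem1}; the present lemma is the bookkeeping that packages the one-step reduction into the desired conclusion. Consequently I expect no real obstacle here. The only point that deserves attention is the verification that the Dieudonn\'e determinant remains non-zero after left multiplication by each $P_i$, for otherwise the degeneracy degree would cease to be defined and the induction could not proceed; but this is immediate from multiplicativity of $\det$ together with $\det_1 P_i=1$.
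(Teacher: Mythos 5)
Your proof is correct and follows exactly the route the paper takes: induction on $\dd(A)$ using Lemma \ref{20111208:lem1} as the inductive step (the paper merely states this in one line, while you supply the bookkeeping — non-vanishing of $\det(P_1A)$ and multiplicativity of $\det_1$ — explicitly and correctly).
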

\begin{proof}
Recall that, by Theorem \ref{20111208:thm1}(i), $\dd(A)\geq0$.
The claim follows from Lemma \ref{20111208:lem1} by induction on $\dd(A)$.
%
%When $\Delta(A)=0$, the matrix $A$ is strongly non-degenerate, so we can just take $P=\id$.
%%
%If $\Delta(A)\geq1$, the matrix $A$ is not strongly non-degenerate,
%hence by Lemma \ref{20111208:lem1}
%there exists a matrix $P_1 \in \Mat_{\ell\times\ell} \mc B((\partial^{-1}))$
%such that $\det(P_1)=\lambda^{r_1}$ for some $r_1\in\mb Z$,
%and $\tord(P_1A)\leq \tord(A)+r_1-1$.
%We therefore have
%$$
%\begin{array}{c}
%\Delta(P_1A)
%=\tord(P_1A)-\deg_\lambda(\det (P_1A))
%%=\tord(P_1A)-\deg_\lambda((\det P_1)(\det A))
%%=\tord(P_1A)-\deg_\lambda(\lambda^{r_1}(\det A))
%=\tord(P_1A)-\deg_\lambda(\det A)-r_1
%\\
%\leq 
%%\tord(A)+r_1-1-\deg_\lambda(\det A)-r_1=
%\tord(A)-1-\deg_\lambda(\det A)=\Delta(A)-1\,.
%\end{array}
%$$
%We can therefore apply the inductive assumption to the matrix $P_1A$,
%to find a matrix $P_2 \in \Mat_{\ell\times\ell} \mc B((\partial^{-1}))$
%such that $\det(P_2)=\lambda^{r_2}$ for some $r_2\in\mb Z$
%and $P_2P_1A$ is strongly non-degenerate.
%%
%Letting $P=P_2P_1$ we obtain the claim.
\end{proof}
\begin{proof}[Proof of Theorem \ref{20111208:thm2}]
By Lemma \ref{20111208:lem2} there exists 
$P \in \Mat_{\ell\times\ell} \mc B((\partial^{-1}))$
such that $\det_1P=1$ and $PA$ is strongly non-degenerate.
Therefore, if we fix an optimal majorant for the matrix $PA$
and let $\overline{PA}(\lambda)$ be the corresponding characteristic matrix,
we have
$\det(PA)=\det(\overline{PA}(\lambda))$.
On the one hand, we have that
$\det_1(PA)=(\det_1 P)(\det_1 A)=\det_1 A$.
On the other hand, $\overline{PA}(\lambda)$ has entries in $\mc B((\lambda^{-1}))$ (since both $P$ and $A$
have entries in $\mc B((\partial^{-1}))$),
therefore its determinant lies in $\mc B((\lambda^{-1}))$,
implying that $\det_1A\in\mc B$.
\end{proof}

\pecettaold{
QUESTION (for Sylvain):
there should be a more direct proof of the fact that $\det A$
lies in the integral closure of $\mc A$,
i.e. it is solution of some monic polynomial equation with coefficients in $\mc A$.
}

\begin{example}\label{20120104:ex1}
Consider an arbitrary $2\times2$ matrix differential operator of order 1 
over a differential domain $\mc A$:
$$
A=\left(\begin{array}{ll}
a\partial+\alpha & b\partial+\beta \\
c\partial+\gamma & d\partial+\delta
\end{array}\right)\,,
$$
where $a,b,c,d,\alpha,\beta,\gamma,\delta\in\mc A$.
We may assume, without loss of generality, that $a\neq0$.
We denote,
$$
\Delta_\lambda
=
\left|\begin{array}{ll}
a\lambda+\alpha & b\lambda+\beta \\
c\lambda+\gamma & d\lambda+\delta
\end{array}\right|\,.
$$
It can be expanded as
$\Delta_\lambda=\Delta_\infty\lambda^2+\Delta'_0\lambda+\Delta_0$, where
$$
\Delta_\infty
=
\left|\begin{array}{ll}
a & b \\ c & d
\end{array}\right|
\,\,,\,\,\,\,
\Delta'_0
=
\left|\begin{array}{ll}
a & \beta \\ c & \delta
\end{array}\right|
+
\left|\begin{array}{ll}
\alpha & b \\ \gamma & d
\end{array}\right|
\,\,,\,\,\,\,
\Delta_0
=
\left|\begin{array}{ll}
\alpha & \beta \\ \gamma & \delta
\end{array}\right|
\,.
$$
There are the following three possibilities for $\det A$:
\begin{enumerate}[1.]
\item
If $\Delta_\infty=ad-bc\neq0$, the matrix $A$ is strongly non-degenerate
of total order 2.
Its Dieudonn\`e determinant is
$\det A=\Delta_\infty\lambda^2$.
\item
If $\Delta_\infty=0$ and $\Delta'_0\neq0$, we have
$\det A=\Delta'_0\lambda$.
In this case, the matrix $A$ has total order 2 if $ad=bc\neq0$,
while it is strongly non-degenerate 
(of total order 1) if $ad=bc=0$.
\item
Finally, if $\Delta_\infty=\Delta'_0=0$,
we have
$$
\det A=\det{}_1A=\Delta_0-(\alpha c-\gamma a)\big(\frac{b}{a}\big)'\,.
$$
For the total order of $A$ there are several possibilities:
if $ad=bc\neq0$ then $\tord(A)=2$;
if $ad=bc=0$ and $(\alpha d,\delta a,\beta c,\gamma b)\neq(0,0,0,0)$ then $\tord(A)=1$;
finally if $ad=bc=\alpha d=\delta a=\beta c=\gamma b=0$, and $(\alpha\delta,\beta\gamma)\neq(0,0)$,
then the matrix $A$ is strongly non-degenerate of total order 0, unless $\det A=0$.
In general $\det A$ does not lie in the domain $\mc A$ (see Example \ref{20120104:ex} below).
On the other hand, 
from Theorem \ref{20111208:thm2} we know that $\det A$ is solution 
of a monic polynomial equation with coefficients in $\mc A$. In fact, 
in this example we have
$$
\begin{array}{c}
\det A+\det A^T=2\Delta_0-(\alpha d'+\delta a'-\beta c'-\gamma b') \,,\\
(\det A)(\det A^T)=\Delta_0^2-\Delta_0(\alpha d'+\delta a'-\beta c'-\gamma b' )
\\ 
+(\beta\gamma-\alpha\delta)(b'c'-a'd')+\alpha\delta a'd'
\,\in\mc A\,.
\end{array}
$$
Hence,  $\det A$ is a root of the following quadratic polynomial 
with coefficients in $\mc A$:
$$
x^2-(\det A+\det A^T)x+(\det A)(\det A^T)\,.
$$
\end{enumerate}
\end{example}
\begin{example}\label{20120104:ex}
In Example \ref{20120104:ex1}, if $\dd(A)=\tord(A)-\d(A)$ is equal to 0 or 1,
then $\det_1 A$ lies in the domain $\mc A$.
On the other hand, if $\dd(A)=2$ (i.e. $\tord(A)=2$ and $\d(A)=0$),
this is not necessarily the case.
To see  this, consider the algebra $\bar{\mc A}=\mb C[a,b,d]/(ad-b^2)$,
and let $\mc A=\mb C[a^{(n)},b^{(n)},d^{(n)}\,|\,n\in\mb Z_+]/\{ad-b^2\}$,
where $\{p\}$ denotes the differential ideal generated by $p$.
Note that $\bar{\mc A}$ is a domain,
hence, by Kolchin's Theorem \cite[Prop.IV.10]{Kol73}, $\mc A$ is a domain too.
Consider the matrix $A$ as in Example \ref{20120104:ex1}, with $c=b$ 
and $\alpha=\delta=0, \beta=-\gamma\in\mb C\backslash\{0\}$.
In this case we have $\Delta_\lambda=\Delta_0=\beta^2\in\mb C$,
and $\det A=\beta^2-\beta a\big(\frac{b}{a}\big)'$.
Note that in this example the determinant of the transposed matrix is not the same 
as the determinant of $A$, 
in fact we have $\det(A^T)=\beta^2+\beta a\big(\frac{b}{a}\big)'$.
We now show that, with these choices, $\det A$ does not lie in the ring $\mc A$.
Suppose, by contradiction, that $\det A\in\mc A$,
hence $\frac{ba'}{a}\in\mc A$.
Introduce the $\mb Z_+$ grading of $\mc A$ by letting $\deg(a^{(n)})=\deg(b^{(n)})=\deg(d^{(n)})=1$.
Then $\frac{ba'}{a}$ has degree 1, which means that  we have an equality of the form
$$
\frac{ba'}{a}=\sum_{n=0}^N \big( \alpha_na^{(n)}+\beta_nb^{(n)}+\delta_nd^{(n)} \big) \,,
$$
with $N\in\mb Z_+$ and $\alpha_n,\beta_n,\delta_n\in\mb C$.
We have an injective homomorphism of $\mb Z$ graded algebras
$\mc A\to\mb C[a^{-1},a^{(n)},b^{(n)}\,|\,n\in\mb Z_+]$
obtained by mapping $d^{(n)}\mapsto \big(\frac{b^2}{a}\big)^{(n)}$.
By assumption, $\frac{ba'}{a}\in\mb C[a^{-1},a^{(n)},b^{(n)}\,|\,n\in\mb Z_+]$
is in the image of this map,
and the above equality translates to the following 
equality in the ring $\mb C[a^{-1},a^{(n)},b^{(n)}\,|\,n\in\mb Z_+]$:
$$
\frac{ba'}{a}=\sum_{n=0}^N 
\Big( \alpha_na^{(n)}+\beta_nb^{(n)}+\delta_n\left(\frac{b^2}{a}\right)^{(n)} \Big) \,.
$$
Consider the quotient map
$\mb C[a^{-1},a^{(n)},b^{(n)}\,|\,n\in\mb Z_+]\to \mb C[a^{-1},a^{(n)},b\,|\,n\in\mb Z_+]$,
obtained by letting $b'=b''=b^{(3)}=\dots=0$.
The above equation translates to the following identity 
in the ring $\mb C[a^{-1},a^{(n)},b\,|\,n\in\mb Z_+]$:
$$
\frac{ba'}{a}=\sum_{n=0}^N \Big( \alpha_na^{(n)}+\beta_0b+b^2\delta_n\left(\frac{1}{a}\right)^{(n)} \Big) \,,
$$
which implies, looking at the coefficients of $b$ in both sides, that $\frac{a'}{a}=\beta_0\in\mb C$
in the ring $\mb C[a^{-1},a^{(n)}\,|\,n\in\mb Z_+]$, a contradiction.
\end{example}
\begin{conjecture}\label{20120105:conj}
If $\dd(A)=1$, then $\det_1 A$ lies in $\mc A$
(and we know that if $\dd(A)=0$ then $\det_1 A\in\mc A$).
\end{conjecture}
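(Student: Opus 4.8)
The plan is to promote the single reduction step behind Theorem \ref{20111208:thm2} to an identity over $\mc A$ itself, and thereby reduce the conjecture to one concrete divisibility. Let $A\in\Mat_{\ell\times\ell}\mc A((\partial^{-1}))$ with $\det A\neq0$ and $\dd(A)=1$, fix an optimal majorant $\{N_i,h_i\}$, and form the characteristic matrix $\bar A(\lambda)$, whose entries lie in $\mc A[\lambda^{\pm1}]$. By Theorem \ref{20111208:thm1}(iii), $\det\bar A(\lambda)=0$, so $\bar A(1)\in\Mat_{\ell\times\ell}\mc A$ is singular over $\mc K$; since $\mc A$ is a domain, the homogeneous system $\sum_j f_j\bar A(1)_{jk}=0$ has a nonzero solution $f\in\mc A^\ell$ (for instance $f_j=C_{j,i_0}$, the $(j,i_0)$ cofactors of $\bar A(1)$, by the identity $\text{adj}(\bar A(1))\,\bar A(1)=\det\bar A(1)\,\id=0$, whenever these are not all zero). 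Choosing a row index $i$ with $f_i\neq0$ and building the matrix $P$ of \eqref{20111214:eq1} with this \emph{un-normalized} $f$, I get $P\in\Mat_{\ell\times\ell}\mc A((\partial^{-1}))$ with $\det_1P=f_i$ (the pivot), and the proof of Lemma \ref{20111208:lem1} gives $\dd(PA)\le0$, hence $\dd(PA)=0$ since $\det(PA)\neq0$.

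Two consequences follow. First, $PA\in\Mat_{\ell\times\ell}\mc A((\partial^{-1}))$ is strongly non-degenerate, so the already-known $\dd=0$ case applied to $PA$ — its Dieudonn\'e determinant equals $\det\overline{PA}(\lambda)$ for an optimal majorant, and this characteristic matrix has entries in $\mc A[\lambda^{\pm1}]$ — yields $\det_1(PA)\in\mc A$. Second, multiplicativity gives $\det_1(PA)=(\det_1P)(\det_1A)=f_i\det_1A$. Thus the conjecture is equivalent to the purely algebraic \textbf{divisibility statement}: the pivot $f_i$ divides $\det_1(PA)$ in $\mc A$.

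To attack this I would compute $\det_1(PA)$ explicitly from the entries of $PA$. Because each $f_j$ enters $P$ as a \emph{left} multiplier, no derivative of $f_j$ occurs, and expanding along the modified ($i$-th) row expresses $\det_1(PA)$ as an $\mc A$-linear combination $\sum_{j,k} f_j\,c_{jk}\,C_{ik}$, where the $c_{jk}\in\mc A$ are explicit subleading coefficients of $\partial^{h_j-h_\ell}\!\circ A_{jk}$ and the $C_{ik}\in\mc A$ are minors of $\bar A(1)$ not involving row $i$. The goal is to factor $f_i$ out of this sum, using the kernel relations $\sum_j f_j\bar A(1)_{jk}=0$ together with the adjugate identities $\bar A(1)\,\text{adj}(\bar A(1))=0$. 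In the $2\times2$ model of Example \ref{20120104:ex1}(3) with $\dd=1$ the single relation $a\delta=b\gamma$ already converts $\det_1(PA)=a\big[\gamma(\beta-b')+\delta(a'-\alpha)\big]$ into a multiple of the pivot $a$; I expect the general factorization to follow from a Sylvester/Jacobi-type determinantal identity exhibiting the cofactor row $(C_{ik})_k$ as $f_i$ times a vector over $\mc A$ modulo the kernel relations.

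The main obstacle is precisely this last step for general $\ell$, over a domain $\mc A$ that need be neither integrally closed nor factorial. Valuation-theoretic methods cannot help: intersecting over all valuation rings containing $\mc A$ yields only $\bar{\mc A}$, which is exactly what Theorem \ref{20111208:thm2} already provides, whereas the conjecture demands a global divisibility witnessed by an explicit identity $\det_1(PA)=f_i\cdot(\text{element of }\mc A)$. The delicate point — and the reason $\dd=1$ should succeed while $\dd\ge2$ fails — is the bookkeeping of which pivots are non-units: at $\dd=1$ there is a single reduction, and its one non-unit pivot $f_i$ is cleared by the one corank relation of $\bar A(1)$, whereas for $\dd\ge2$ a non-unit pivot produced at the first step need not be cleared by the relations arising at later steps, as the computation $\det_1(PA)=-a\beta^2+\beta ab'-\beta ba'$ with pivot $a$ in Example \ref{20120104:ex} demonstrates. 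Making the divisibility at $\dd=1$ uniform in $\ell$, and isolating exactly why the single relation suffices, is the crux I would have to resolve.
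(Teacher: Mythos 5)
This statement is Conjecture \ref{20120105:conj}; the paper offers no proof of it, so your proposal must stand on its own, and it does not: it is a correct reduction followed by an unproved claim that carries the entire content of the conjecture. The reduction itself is sound. An un-normalized left null vector $f\in\mc A^\ell$ of $\bar A(1)$ exists (the left kernel of a matrix with entries in the domain $\mc A$ is defined over the fraction field of $\mc A$, and denominators can be cleared); the matrix $P$ of \eqref{20111214:eq1} built from this $f$ lies in $\Mat_{\ell\times\ell}\mc A((\partial^{-1}))$ and has $\det_1P=f_i$; and the order estimates in the proof of Lemma \ref{20111208:lem1} go through verbatim without the normalization $f_i=1$, since the cancellation in row $i$ uses only the relation $\sum_jf_j\bar A(1)_{jk}=0$. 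Hence $\dd(PA)=0$, $\det_1(PA)\in\mc A$ by the strongly non-degenerate case, and $f_i\,\det_1A=\det_1(PA)$. All of this is fine.

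The gap is that the resulting ``divisibility statement'' is not a genuine reduction in difficulty but a restatement of the conjecture: since $\det_1(PA)/f_i=\det_1A$ holds in $\mc K$ regardless of how $f$ is chosen, the assertion that $f_i$ divides $\det_1(PA)$ in $\mc A$ is literally equivalent to the assertion $\det_1A\in\mc A$. The whole burden therefore falls on the ``Sylvester/Jacobi-type determinantal identity'' you invoke to factor $f_i$ out of the expansion of $\det_1(PA)$, and no such identity is stated, let alone proved for general $\ell$ over an arbitrary domain; you acknowledge this yourself. One $2\times2$ instance where the factorization happens to work is not evidence that it works uniformly, and the valuation-theoretic machinery of the paper cannot supply it, as you correctly observe. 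Your reduction does yield a little more than Theorem \ref{20111208:thm2} in special cases: varying the pivot index gives $\det_1A\in\bigcap_{i}f_i^{-1}\mc A$, which settles the conjecture when $\mc A$ is, say, a GCD domain and $f$ can be chosen with no common factor; but for the arbitrary domain of the conjecture the crux remains entirely open.
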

\pecettaold{
Conjecture: $\det A$ is expressed as polynomial in the coefficients of the entries of $A$
and their logarithmic derivatives (i.e. fractions $\frac{a'}{a}$, where $a$ is a coefficient in $A$).
}
\begin{remark}\label{20111214:rem}
The proof of Lemma \ref{20111208:lem1} provides an algorithm to compute $\det A$,
inductively on 
the degeneracy degree $\dd(A)=\tord(A)-\d(A)$.
Let $\{N_j,h_j\}_{j=1}^\ell$ be an optimal majorant for $A$.
If $\dd(A)=0$ (i.e. $A$ is strongly non-degenerate), then
the characteristic matrix $\bar A(\lambda)$ associated to this majorant is non-degenerate,
and in this case $\det A=\det \bar A(\lambda)=\det \bar A(1)\lambda^{\sum_j(N_j-h_j)}$.
If $\bar A(\lambda)$ is not strongly non-degenerate, we take a left null vector $(f_1,...,f_n)$ 
of the matrix $\bar A(1)$, normalized in such a way that ${f_i}=1$ for some $i$.
Then we multiply $A$ on the left by the matrix $P$ as in \eqref{20111214:eq1}. 
The resulting matrix $PA$ has determinant equal to $(\det A)\lambda^{\sum_j(h_j-h_\ell)}$,
and its degeneracy degree is at most $\dd(A)-1$.

The algorithm just described is based on an optimal majorant $\{N_j,h_j\}_{j=1}^\ell$.
We now give a constructive way of finding one.
To do so we consider the matrix of orders: $M=\big(m_{ij}\big)_{i,j=1}^\ell$,
where $m_{ij}=\ord(A_{ij})$.
Note that a majorant of $A$ only depends on $M$,
and we define the notion of total order and majorant of $M$ in the obvious way.
For a permutation $\sigma\in S_\ell$, we let
$$
m(\sigma)=\sum_{i=1}^\ell m_{i,\sigma(i)}\,.
$$
Hence, by definition, $\tord(M)=\max_{\sigma\in S_\ell}m(\sigma)$.
We then define the following subset of $\{1,\dots,\ell\}^2$:
$$
\Gamma(M)=\Big\{(i,j)\,\Big|\,\sigma(i)=j
\,\text{ for some } \sigma\in S_\ell \text{ such that } m(\sigma)=\tord(A)\Big\}\,.
$$
If $\Gamma(M)=\{1,\dots,\ell\}^2$ then $\{N_j=m_{1j},h_j=m_{11}-m_{j1}\}_{j=1}^\ell$
is an optimal majorant of $M$ \cite[Cor.III.2]{Huf65}.
For $i,j=1,\dots,\ell$, we let 
$$
\d{}_{ij}(M)=\tord(A)-\max_{\sigma\in S_\ell\,|\,\sigma(i)=j} m(\sigma)\,.
$$
Note that, by definition, $\Gamma(M)=\{(i,j)\,|\,\d_{ij}(M)=0\}$.
If $\d_{{i_1}{j_1}}(M)>0$ for some $(i_1,j_1)$, define $M_1$ as the matrix obtained from $M$ 
by replacing $m_{{i_1}{j_1}}$ with $(m_{{i_1}{j_1}} + d_{{i_1}{j_1}}(M))$. 
It is not hard to prove that $\tord(M_1)=\tord(M)$, 
and ${\sum_{i,j}{d_{ij}(M_1)}} < {\sum_{i,j}{d_{ij}(M)}}$.
By repeating this procedure $n$ times, we get a new matrix $M_n$,
with the same total order as $M$, 
whose entries are greater than or equal to the corresponding entries of $M$,
and such that ${\sum_{i,j}{d_{ij}(M_n)}}=0$ (i.e. $\Gamma(M_n)=\{1,\dots,\ell\}^2$).
Hence, by the above result, we can find an optimal majorant of $M_n$,
which is also an optimal majorant for $M$ since $m_{ij}\leq(M_n)_{ij}$ for every $i,j=1,\dots,\ell$,
and $\tord(M)=\tord(M_n)$.
\end{remark}
\pecettaold{
From the proof of Lemma \ref{20111208:lem1} we have that,
\begin{itemize}
\item 
if $\dd(A)=0$, then $\det_1A\in\mc A$,
\item
Suppose $\dd(A)=1$ and fix $i$. 
Let $(f_1,\dots,f_\ell)\in\mc A^\ell$ be left eigenvector of $\bar A(1)$ of e-value 0
(note: we can assume that $f_1,\dots,f_\ell$ have no common factors,
i.e. the ideal $(f_1,\dots,f_\ell)\subset\mc A$ is not contained in any principal ideal).
Then letting $P$ be the matrix as in the proof (with the $f$'s in row $i$),
$PA$ is strongly non-degenerate,
and we get $f_i\det_1A=(\det_1P)(\det_1A)=\det_1(PA)\in\mc A$, namely $\det_1A\in\mc A/f_i$.
In conclusion, $\det_1A\in\bigcap_{i=1}^\ell\frac{\mc A}{f_i}$.
\item
Suppose $\dd(A)=2$ and fix $i,j$. 
Let $(f_1,\dots,f_\ell)\in\mc A^\ell$ be left eigenvector of $\bar A(1)$ of e-value 0
and let $P$ be the matrix as in the proof (with the $f$'s in row $i$).
Then $PA$ has $\dd(PA)\leq1$.
Let then  $(g_1,\dots,g_\ell)\in\mc A^\ell$ be left eigenvector of $\overline{PA}(1)$ of e-value 0
and let $Q$ be the matrix as in the proof (with the $g$'s in row $j$).
Then $QPA$ is strongly non-degenerate, and 
we get $g_jf_i\det_1A=(\det_1Q)(\det_1P)(\det_1A)=\det_1(QPA)\in\mc A$, namely $\det_1A\in\mc A/f_ig_j$.
In conclusion, $\det_1A\in\bigcap_{i,j=1}^\ell\frac{\mc A}{f_ig_j}$.
\item
In general, if $\dd(A)=r\geq1$, we conclude that there exist
elements $(F^\alpha_1,\dots,F^\alpha_\ell)\in\mc A^\ell,\,\alpha=1,\dots,r$,
each with no common factors,
such that
$\det_1A\in\bigcap_{i_1,\dots,i_r=1}^\ell\frac{\mc A}{F^1_{i_1}\dots F^r_{i_r}}$.
\end{itemize}
}

\begin{theorem}\label{20111215:thm1}
Let $\mc A$ be a unital differential subring of the differential field $\mc K$,
and assume that $\mc A$ is integrally closed.
Let $A\in\Mat_{\ell\times\ell}\mc A((\partial^{-1}))$ be a matrix with $\det A\neq0$.
Then $A$ is invertible in $\Mat_{\ell\times\ell}\mc A((\partial^{-1}))$
if and only if $\det_1A$ is invertible in $\mc A$.
\end{theorem}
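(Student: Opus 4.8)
The plan is to prove the two implications separately; the forward one is short, while the reverse one, which I would model on the proof of Theorem~\ref{20111208:thm2}, is the substantial part.

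For the implication ``$A$ invertible over $\mc A((\partial^{-1}))$ $\Rightarrow$ $\det{}_1A$ invertible in $\mc A$'', set $B=A^{-1}$, which by hypothesis also lies in $\Mat_{\ell\times\ell}\mc A((\partial^{-1}))$. Applying the multiplicativity $\det(AB)=(\det A)(\det B)$ to $AB=\id$ and separating the scalar factor from the power of $\lambda$ gives $(\det{}_1A)(\det{}_1B)=1$ and $\d(A)+\d(B)=0$. Since $A$ and $B$ both have entries in $\mc A((\partial^{-1}))$ and non-zero Dieudonn\'e determinant, Theorem~\ref{20111208:thm2} yields $\det{}_1A,\det{}_1B\in\bar{\mc A}$; as $\mc A$ is integrally closed, $\bar{\mc A}=\mc A$, so $\det{}_1A\in\mc A$ is invertible in $\mc A$, with inverse $\det{}_1B$.

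For the reverse implication, assume $\det A\neq0$ and $\det{}_1A$ invertible in $\mc A$; by Proposition~\ref{20111005:prop2}(a) the inverse $A^{-1}$ already exists in $\Mat_{\ell\times\ell}\mc K((\partial^{-1}))$, and I must show its entries lie in $\mc A((\partial^{-1}))$. Exactly as in the proof of Theorem~\ref{20111208:thm2}, I would let $\mc B$ run over the valuation rings of $\mc K$ containing $\mc A$; since $\mc A$ is integrally closed, Proposition~\ref{20111210:prop1} gives $\mc A=\bigcap_{\mc B}\mc B$, so it suffices to prove $A^{-1}\in\Mat_{\ell\times\ell}\mc B((\partial^{-1}))$ for each such $\mc B$ (the coefficients of $A^{-1}$ will then lie in $\bigcap_{\mc B}\mc B=\mc A$). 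Fix one $\mc B$. Then $A\in\Mat_{\ell\times\ell}\mc B((\partial^{-1}))$, and since $(\det{}_1A)^{-1}\in\mc A\subseteq\mc B$ the element $\det{}_1A$ is a unit of $\mc B$. By Lemma~\ref{20111208:lem2} there is $P\in\Mat_{\ell\times\ell}\mc B((\partial^{-1}))$ with $\det{}_1P=1$ and $PA$ strongly non-degenerate. If I can show $B:=PA$ is invertible in $\Mat_{\ell\times\ell}\mc B((\partial^{-1}))$, with inverse $C$, then $CP\in\Mat_{\ell\times\ell}\mc B((\partial^{-1}))$ is a left inverse of $A$, which must coincide with the two-sided inverse $A^{-1}$, giving the conclusion.

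It remains to invert a strongly non-degenerate $B\in\Mat_{\ell\times\ell}\mc B((\partial^{-1}))$ whose Dieudonn\'e scalar $\det{}_1B=\det{}_1A$ is a unit of $\mc B$; this is the main obstacle and the only place the matrix structure is really used. Choosing an optimal majorant $\{N_i,h_i\}_{i=1}^\ell$ (Theorem~\ref{20111208:thm1}(ii)), I would conjugate by diagonal powers of $\partial$, setting $B'=DBD'$, where $D$ is the diagonal matrix with entries $\partial^{h_1},\dots,\partial^{h_\ell}$ and $D'$ the diagonal matrix with entries $\partial^{-N_1},\dots,\partial^{-N_\ell}$. Each entry $B'_{ij}=\partial^{h_i}B_{ij}\partial^{-N_j}$ then has order at most $h_i+(N_j-h_i)-N_j=0$, and a short computation shows that its coefficient of $\partial^0$ equals the coefficient of $\partial^{N_j-h_i}$ in $B_{ij}$, i.e. the $(i,j)$ entry of the characteristic matrix $\bar B(1)$. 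By Theorem~\ref{20111208:thm1}(iv), $\det\bar B(\lambda)=(\det{}_1B)\lambda^{\d(B)}$, so $\det\bar B(1)=\det{}_1B$ is a unit of $\mc B$ and $\bar B(1)$ is invertible in $\Mat_{\ell\times\ell}\mc B$; hence $B'$, viewed in $(\Mat_{\ell\times\ell}\mc B)((\partial^{-1}))$, has order exactly $0$ with invertible leading coefficient. A matrix pseudodifferential operator with invertible leading coefficient is invertible in $\Mat_{\ell\times\ell}\mc B((\partial^{-1}))$ by the geometric-series argument behind \eqref{20111130:eq2} and Remark~\ref{20111003:rem}, which applies verbatim to the (noncommutative) coefficient ring $\Mat_{\ell\times\ell}\mc B$ once the leading coefficient is invertible. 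Since $D$ and $D'$ are units of $\Mat_{\ell\times\ell}\mc B((\partial^{-1}))$, it follows that $B=D^{-1}B'D'^{-1}$ is invertible over $\mc B((\partial^{-1}))$, completing the argument. The delicate points to check are the identification of the order-$0$ part of $B'$ with $\bar B(1)$ and the validity of this geometric-series inversion over $\Mat_{\ell\times\ell}\mc B$.
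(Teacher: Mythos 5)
Your proposal is correct and follows essentially the same route as the paper: the forward direction via Theorem \ref{20111208:thm2} and multiplicativity of $\det$, and the reverse direction by intersecting over valuation rings $\mc B\supseteq\mc A$, reducing to the strongly non-degenerate case via Lemma \ref{20111208:lem2}, and then inverting by conjugating with diagonal powers of $\partial$ and applying the geometric-series argument to the invertible leading coefficient $\bar B(1)$. The last step is exactly the paper's Lemma \ref{20111215:lem1}, which you have in effect reproved inline.
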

\begin{lemma}\label{20111215:lem1}
Let $\mc A$ be an arbitrary unital differential subring of the differential field $\mc K$,
and assume that the matrix $A\in\Mat_{\ell\times\ell}\mc A((\partial^{-1}))$
is strongly non-degenerate.
Then $A$ is invertible in $\Mat_{\ell\times\ell}\mc A((\partial^{-1}))$
if and only if $\det_1A$ is invertible in $\mc A$.
\end{lemma}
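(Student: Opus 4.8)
The plan is to reduce the statement, by a diagonal conjugation, to the case of a matrix of order $\le0$ whose order-zero coefficient is the characteristic matrix, and then to read off invertibility from that single coefficient. First I would invoke Theorem \ref{20111208:thm1}(ii) to fix an optimal majorant $\{N_i,h_i\}_{i=1}^\ell$ of $A$ (it exists since $\det A\neq0$), and set $U=\mathrm{diag}(\partial^{h_1},\dots,\partial^{h_\ell})$ and $V=\mathrm{diag}(\partial^{N_1},\dots,\partial^{N_\ell})$, both invertible in $\Mat_{\ell\times\ell}\mc A((\partial^{-1}))$ with inverses $\mathrm{diag}(\partial^{-h_i})$ and $\mathrm{diag}(\partial^{-N_j})$. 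Put $B=UAV^{-1}$. A short order count using the majorant condition $\ord(A_{ij})\le N_j-h_i$ shows that every entry $B_{ij}=\partial^{h_i}\circ A_{ij}\circ\partial^{-N_j}$ has order $\le0$, and that the coefficient of $\partial^0$ in $B_{ij}$ is exactly $a_{ij;N_j-h_i}$; hence the order-zero coefficient matrix of $B$ is $B_0=\bar A(1)$. Since $A$ is strongly non-degenerate and the majorant is optimal, Theorem \ref{20111208:thm1}(iv) gives $\det\bar A(\lambda)=\det A=(\det{}_1A)\lambda^{\d(A)}$, so evaluating at $\lambda=1$ yields $\det B_0=\det{}_1A$. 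Because $U$ and $V$ are invertible over $\mc A((\partial^{-1}))$, $A$ is invertible in $\Mat_{\ell\times\ell}\mc A((\partial^{-1}))$ if and only if $B$ is, and the scalars $\det{}_1A$ and $\det B_0$ coincide; so the lemma is reduced to showing that $B$ is invertible over $\mc A((\partial^{-1}))$ if and only if $\det B_0$ is a unit of $\mc A$.

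For the ``if'' direction I would assume $\det B_0$ invertible in $\mc A$, so that $B_0\in\Mat_{\ell\times\ell}\mc A$ is an invertible matrix over $\mc A$, and write $B=B_0\bigl(I+B_0^{-1}(B-B_0)\bigr)$, where $B-B_0$ has order $\le-1$, hence so does $B_0^{-1}(B-B_0)$. The geometric series $\sum_{k\ge0}\bigl(-B_0^{-1}(B-B_0)\bigr)^k$ then converges in the $\partial^{-1}$-adic topology (each power of $\partial$ occurring with a coefficient given by a finite sum) to a matrix in $\Mat_{\ell\times\ell}\mc A((\partial^{-1}))$, giving $B^{-1}=\bigl(\sum_{k\ge0}(-B_0^{-1}(B-B_0))^k\bigr)B_0^{-1}\in\Mat_{\ell\times\ell}\mc A((\partial^{-1}))$, whence $A^{-1}=V^{-1}B^{-1}U$ also has entries in $\mc A((\partial^{-1}))$.

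For the ``only if'' direction, assume $B$ is invertible in $\Mat_{\ell\times\ell}\mc A((\partial^{-1}))$ and let $C=B^{-1}$. The key point, which I expect to be the main obstacle, is to control the leading coefficient of $C$: a priori $C$ could have positive order, and then its order-zero part need not be $B_0^{-1}$. Here I would use that $\det B_0=\det{}_1A\neq0$, so $B_0$ is invertible over the field $\mc K$; running the same geometric-series computation as above over $\mc K$ rather than over $\mc A$ produces an inverse of $B$ of order $0$ whose coefficient of $\partial^0$ is the field inverse $B_0^{-1}$. By uniqueness of inverses in $\Mat_{\ell\times\ell}\mc K((\partial^{-1}))$, this element equals $C$; thus $C$ has order $0$ and its coefficient of $\partial^0$ is $B_0^{-1}$. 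But $C\in\Mat_{\ell\times\ell}\mc A((\partial^{-1}))$, so this coefficient $B_0^{-1}$ in fact has all its entries in $\mc A$. Then $B_0$ and $B_0^{-1}$ are mutually inverse matrices over the commutative ring $\mc A$, so $\det B_0$ is a unit of $\mc A$; since $\det B_0=\det{}_1A$, this completes the proof.
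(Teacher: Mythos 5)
Your proof is correct and follows essentially the same route as the paper: conjugating $A$ by the diagonal matrices $\mathrm{diag}(\partial^{h_i})$ and $\mathrm{diag}(\partial^{-N_j})$ built from an optimal majorant, identifying the order-zero coefficient of the resulting matrix with $\bar A(1)$, and reading off invertibility over $\mc A((\partial^{-1}))$ from invertibility of $\bar A(1)$ over $\mc A$ via the geometric series. Your explicit treatment of the ``only if'' direction (uniqueness of the inverse forcing $B^{-1}$ to have order $0$ with constant term $\bar A(1)^{-1}$) just spells out what the paper leaves implicit.
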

\begin{proof}
Let $\{N_i,h_i\}_{i=1}^\ell$ be an optimal majorant for the matrix $A$,
and consider the new matrix
$$
\tilde A=
\left(\begin{array}{ccc}
\partial^{h_1} & & 0 \\
& \ddots & \\
0 & & \partial^{h_\ell}
\end{array}\right)
A
\left(\begin{array}{ccc}
\partial^{-N_1} & & 0 \\
& \ddots & \\
0 & & \partial^{-N_\ell}
\end{array}\right)
\,\in\Mat{}_{\ell\times\ell}\mc A((\partial^{-1}))\,.
$$
Since, by assumption, $A$ is strongly non-degenerate,
the matrix $\bar A(1)=\big(A_{ij;N_j-h_i}\big)_{i,j=1}^\ell$ is non-degenerate.
But this matrix is the leading coefficient of the matrix $\tilde A$.
To conclude, we observe that, 
$\det_1A=\det\bar A(1)$ is invertible in $\mc A$,
if and only if the matrix $\bar A(1)$ is invertible in $\Mat_{\ell\times\ell}\mc A$,
which, by formula \eqref{20111130:eq2} (which works also in the matrix case),
happens if and only if the matrix $\tilde A$ is invertible in $\Mat_{\ell\times\ell}\mc A((\partial^{-1}))$,
which, obviously, 
is the same as saying that the matrix $A$ is invertible in $\Mat_{\ell\times\ell}\mc A((\partial^{-1}))$.
\end{proof}
\begin{proof}[Proof of Theorem \ref{20111215:thm1}]
Since $\mc A$ is integrally closed, by Theorem \ref{20111208:thm2}
we have $\det_1 A\in\mc A$.
Moreover, if the matrix $A$ is invertible in $\Mat_{\ell\times\ell}\mc A((\partial^{-1}))$,
then also $\det_1 A^{-1}\in\mc A$,
therefore $\det_1 A$ is an invertible element of $\mc A$.
Conversely, assume that $\det_1 A$ is invertible in $\mathcal{A}$. 
Let $\mc B\subset\mc K$ be any valuation subring of $\mc K$ containing $\mc A$,
and let $A^{-1}$ be the inverse of $A$ in $\Mat_{\ell\times\ell}\mc K((\partial^{-1}))$.
By Lemma \ref{20111208:lem2}
there exists a matrix $P\in\Mat_{\ell\times\ell}\mc B((\partial^{-1}))$
such that $\det_1P=1$ and $PA$ is stronlgy non-degenerate.
By assumption $\det_1(PA)=\det_1A$ is invertible in $\mc A$ (hence in $\mc B$),
therefore, by Lemma \ref{20111215:lem1}, the matrix $PA$ is invertible in $\Mat_{\ell\times\ell}\mc B((\partial^{-1}))$.
On the other hand, $P$ is product of matrices of the form \eqref{20111214:eq1},
and each such factor is obviously invertible in $\Mat_{\ell\times\ell}\mc B((\partial^{-1}))$.
Hence, $A^{-1}=(PA)^{-1}P\in\Mat_{\ell\times\ell}\mc B((\partial^{-1}))$.
Since this holds for every valuation ring $\mc B\subset\mc K$ containing $\mc A$,
we obtain the claim.
\end{proof}

%%%%%%%%%%%%%%%%%%%%%%%%%%%%%%%%%%%%%%%%%%%%%%%%%%%%%%%%%%%%%%%%%%%%%%%%%%%%%%%%%%%%%%%%%%%%%%%%%%%%%%%%%%%%%%
%%%%%%%%%%%%%%% Appendix %%%%%%%%%%%%%%%%%%%%%%%%%%%%%%%%%%%%%%%%%%%%%%%%%%%%%%%%%%%%%%%%%%%%%%%%%%%%%%%%%%%%%
%%%%%%%%%%%%%%%%%%%%%%%%%%%%%%%%%%%%%%%%%%%%%%%%%%%%%%%%%%%%%%%%%%%%%%%%%%%%%%%%%%%%%%%%%%%%%%%%%%%%%%%%%%%%%%

%%%%%%%%%%%%%%%%%%%%%%%%%%%%%
\appendix
%\numberwithin{equation}{subsection}
%\numberwithin{theorem}{subsection}

\section{Valuation rings}
\label{sec:app}

Recall that a unital subring $\mc A$ of a field $\mc K$ is called a \emph{valuation ring}
if for any two non-zero elements $a,b\in\mc A$
either $\frac ab\in\mc A$ or $\frac ba\in\mc A$.
Recall also that $\mc A\subset\mc K$ is called \emph{integrally closed} in $\mc K$
if the solutions (in $\mc K$) of every monic polynomial equation with coefficients in $\mc A$
lie in $\mc A$.
The \emph{integral closure} of $\mc A\subset\mc K$ is the minimal subring of $\mc K$
containing $\mc A$ which is integrally closed.
The following fact is well known (see e.g. \cite[Cor.5.22]{AMC69}):
\begin{proposition}\label{20111210:prop1}
The integral closure of $\mc A$ in $\mc K$
is the intersection of all valuation rings of $\mc K$ which contain $\mc A$.
\end{proposition}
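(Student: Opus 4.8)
The plan is to prove the two inclusions between $\bar{\mc A}$, the integral closure of $\mc A$ in $\mc K$, and the ring $\mc V=\bigcap_{\mc B}\mc B$, where $\mc B$ ranges over all valuation rings of $\mc K$ containing $\mc A$.

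For the inclusion $\bar{\mc A}\subseteq\mc V$, the key point is that every valuation ring $\mc B$ of $\mc K$ is integrally closed in $\mc K$. Indeed, suppose $x\in\mc K$ satisfies $x^n+b_{n-1}x^{n-1}+\dots+b_0=0$ with $b_i\in\mc B$. If $x\notin\mc B$ then, by the valuation property, $x^{-1}\in\mc B$, and dividing the equation by $x^{n-1}$ gives $x=-(b_{n-1}+b_{n-2}x^{-1}+\dots+b_0x^{-(n-1)})\in\mc B$, a contradiction. Hence any $x$ integral over $\mc A$ is integral over each such $\mc B$, so lies in $\mc B$; intersecting over all $\mc B$ yields $\bar{\mc A}\subseteq\mc V$.

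The substantive inclusion is $\mc V\subseteq\bar{\mc A}$, which I would prove by contraposition: given $x\in\mc K$ that is \emph{not} integral over $\mc A$, I construct a valuation ring $\mc B\supseteq\mc A$ with $x\notin\mc B$, so that $x\notin\mc V$. First I pass to the subring $\mc A[x^{-1}]\subseteq\mc K$ and note that $x^{-1}$ is a non-unit there: were it a unit, we would have $x\in\mc A[x^{-1}]$, say $x=a_0+a_1x^{-1}+\dots+a_mx^{-m}$ with $a_i\in\mc A$, and multiplying by $x^m$ would give the monic relation $x^{m+1}-a_0x^m-\dots-a_m=0$, contradicting the non-integrality of $x$. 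Thus $x^{-1}$ lies in some maximal ideal $\mf m$ of $\mc A[x^{-1}]$.

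The final and main step is to upgrade the local datum $(\mc A[x^{-1}],\mf m)$ to a valuation ring. Let $\Omega$ be an algebraic closure of the field $\mc A[x^{-1}]/\mf m$, and let $f\colon\mc A[x^{-1}]\to\Omega$ be the quotient map followed by this inclusion, so that $f(x^{-1})=0$. I would then consider the set of pairs $(\mc C,g)$ with $\mc A[x^{-1}]\subseteq\mc C\subseteq\mc K$ a subring and $g\colon\mc C\to\Omega$ a homomorphism extending $f$, partially ordered by extension, and apply Zorn's lemma to obtain a maximal pair $(\mc B,g)$. The crux of the argument --- and the step I expect to be the main obstacle --- is the classical place-extension theorem (\cite[Thm.5.21]{AMC69}), namely that maximality forces $\mc B$ to be a valuation ring of $\mc K$; this is precisely where the valuation property has to be squeezed out of the maximality of $g$. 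Granting it, $\mc B\supseteq\mc A$ and $g(x^{-1})=f(x^{-1})=0$; if $x$ belonged to $\mc B$ we would get $1=g(1)=g(x)\,g(x^{-1})=0$ in $\Omega$, which is absurd. Hence $x\notin\mc B$, which proves the contrapositive and completes the proof.
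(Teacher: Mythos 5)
Your proof is correct; the paper itself offers no proof of this proposition, merely citing \cite[Cor.5.22]{AMC69}, and your argument is precisely the standard one given there (valuation rings are integrally closed for one inclusion; for the other, $x^{-1}$ is a non-unit of $\mc A[x^{-1}]$ when $x$ is not integral, and the place-extension theorem \cite[Thm.5.21]{AMC69} produces a valuation ring containing $\mc A$ that excludes $x$). Since the one step you leave to the classical theorem is exactly the result the paper defers to the same reference, nothing is missing.
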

The above proposition, and the following result, are used in the proof of Theorem \ref{20111208:thm2}. 
\begin{proposition}\label{20111210:prop2}
Let $\ell\geq2$ be a fixed integer. A unital subring $\mc A$ of the field $\mc K$
is a valuation ring if and only if the following condition holds:
\begin{enumerate}
\item[($A_\ell$)]
for every $\ell$-tuple $(a_1,\dots,a_\ell)\in\mc K^\ell\backslash\{(0,\dots,0)\}$
there exists $i$ such that
$a_i\neq0$ and $\frac{a_1}{a_i},\dots,\frac{a_\ell}{a_i}\in\mc A$.
\end{enumerate}
\end{proposition}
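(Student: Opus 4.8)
The plan is to prove the two implications separately; each reduces to an elementary fact about the divisibility relation attached to $\mc A$. Throughout I will use the standard formulation of the valuation ring property: for every nonzero $x\in\mc K$ either $x\in\mc A$ or $x^{-1}\in\mc A$, equivalently, for any two nonzero $a,b\in\mc K$ one of $\frac ab,\frac ba$ lies in $\mc A$. (In the situation of interest $\mc K$ is the field of fractions of $\mc A$, so this is exactly the stated definition applied to numerators and denominators.)

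For the implication $(A_\ell)\Rightarrow$ (valuation ring), I would take two arbitrary nonzero elements $a,b\in\mc K$ and apply condition $(A_\ell)$ to the $\ell$-tuple $(a,b,0,\dots,0)\in\mc K^\ell$, which is nonzero since $a\neq0$. This is the one place where the hypothesis $\ell\geq2$ enters: we need at least two slots to carry both $a$ and $b$. The index $i$ provided by $(A_\ell)$ satisfies $a_i\neq0$, so $i\in\{1,2\}$; if $i=1$ then $\frac ba\in\mc A$, and if $i=2$ then $\frac ab\in\mc A$. This is precisely the valuation ring condition.

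For the converse, (valuation ring) $\Rightarrow(A_\ell)$, I would introduce on $\mc K\setminus\{0\}$ the relation $a\preceq b$ defined by $\frac ba\in\mc A$. It is reflexive because $1\in\mc A$, transitive because $\frac ca=\frac cb\cdot\frac ba$ and $\mc A$ is closed under multiplication, and total precisely by the valuation ring property. Given a nonzero tuple $(a_1,\dots,a_\ell)$, its nonzero entries form a finite nonempty totally preordered set, so I may select an entry $a_i$ that is minimal, i.e. $a_i\preceq a_j$ for every nonzero $a_j$; this yields $\frac{a_j}{a_i}\in\mc A$ for all $j$ with $a_j\neq0$, while for the vanishing entries $\frac{a_j}{a_i}=0\in\mc A$ trivially. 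Hence $(A_\ell)$ holds, and in fact the same argument gives $(A_n)$ for every $n$.

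Both directions are short and purely formal, so I do not anticipate a genuine obstacle. The only points deserving a little care are the verification that a finite totally preordered set possesses a minimal element (obtained by comparing entries one at a time and using totality) and the bookkeeping of the zero entries of the tuple, which is immediate since $0\in\mc A$.
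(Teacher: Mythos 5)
Your proof is correct and follows essentially the same route as the paper: the forward direction reduces $(A_\ell)$ to $(A_2)$ by padding with zeros, and the converse introduces the divisibility (pre)order $a\preceq b\Leftrightarrow \frac ba\in\mc A$ on the nonzero entries, whose totality is exactly the valuation property, and selects an extremal element. The only cosmetic difference is that the paper orders the index set and picks a maximal index while you order the elements and pick a minimal one, and you dispose of the zero entries directly via $0\in\mc A$ rather than by reduction to a smaller tuple.
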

\begin{proof}
First, note that condition $(A_2)$ is the same as the definition of valuation ring.
Clearly, for $\ell\geq3$, condition $(A_\ell)$ implies condition $(A_{\ell-1})$,
by letting $a_\ell=0$.
It remains to prove that condition $(A_2)$ implies condition $(A_\ell)$ for every $\ell$.

Let $(a_1,\dots,a_\ell)\in\mc K^\ell\backslash\{(0,\dots,0)\}$.
If one of the entries is zero, the claim holds by assumption.
Hence, we may assume that $a_1,\dots,a_\ell$ are all non-zero.
Introduce a total order on the set $\{1,\dots,\ell\}$
by letting $j\leq i$ if $\frac{a_j}{a_i}$ lies in $\mc A$, 
and $j=i$ if both $\frac{a_j}{a_i}$ and $\frac{a_i}{a_j}$ lie in $\mc A$.
The transitivity property of $\leq$ follows from the fact that $\mc A$ is a ring,
and $\leq$ is a total order thanks to the assumption $(A_2)$.
Then, letting $i$ be a maximal element, we get the desired result.
%
%Define an oriented graph as follows:
%it has $\ell$ vertices labelled by $1,\dots,\ell$,
%and an arrow $i\to j$ if $\frac{a_j}{a_i}$ lies in $\mc A$.
%Note that, since $\mc A$ is a ring, 
%if there are arrows $i\to j$ and $j\to k$, then there is an arrow $i\to k$.
%%
%To prove condition $(A_\ell)$, we need to show that there is a vertex $i$
%for which there are arrows $i\to j$ for every $j$.
%%
%For this, choose $i$ with maximal number of outgoing arrows.
%If there is no arrow from $i$ to $j$,
%then, by assumption $(A_2)$, there must be an arrow $j\to i$,
%and therefore from $j$ to all vertices pointed to from $i$,
%contradicting the maximality assumption on $i$.
%
%
%By the assumption $(A_{\ell-1})$, there exist $i,j,k\in\{1,\dots,\ell\}$
%such that:
%$$
%\frac{a_2}{a_i},\frac{a_3}{a_i},\frac{a_4}{a_i},\dots,\frac{a_\ell}{a_i}\in\mc A
%\,\,,\,\,\,\,
%\frac{a_1}{a_j},\frac{a_3}{a_j},\frac{a_4}{a_j},\dots,\frac{a_\ell}{a_j}\in\mc A
%\,\,,\,\,\,\,
%\frac{a_1}{a_k},\frac{a_2}{a_k},\frac{a_4}{a_k},\dots,\frac{a_\ell}{a_k}\in\mc A\,.
%$$
%If $j\neq1$, we have $\frac{a_1}{a_i}=\frac{a_1}{a_j}\frac{a_j}{a_i}\in\mc A$, proving the claim.
%Similarly, if $k\neq1$, we have $\frac{a_1}{a_i}=\frac{a_1}{a_k}\frac{a_k}{a_i}\in\mc A$, proving the claim.
%Hence, we are left to consider the case when $j=k=1$.
%In this case the claim holds letting $i=1$.
\end{proof}

%%%%%%%%%%%%%%%%%%%%%%%%%%%%%%%%%%%%%%%%%%%%%%%%%%%%%%%%%%%%%%%%%%%%%%%%%%%%%%%%%%%%%%%%%%%%%%%%%%%%%%%%%%%%%%
%%%%%%%%%%%%%%% Bibliography %%%%%%%%%%%%%%%%%%%%%%%%%%%%%%%%%%%%%%%%%%%%%%%%%%%%%%%%%%%%%%%%%%%%%%%%%%%%%%%%%
%%%%%%%%%%%%%%%%%%%%%%%%%%%%%%%%%%%%%%%%%%%%%%%%%%%%%%%%%%%%%%%%%%%%%%%%%%%%%%%%%%%%%%%%%%%%%%%%%%%%%%%%%%%%%%

% Non-BibTeX users please use


\begin{thebibliography}{}

% Format for Journal Reference
%Author, Journal \textbf{Volume}, (year) page numbers.
% Format for books
%Author, \emph{Book title} (Publisher, place year) page numbers

\bibitem[Art57]{Art57}
E.~Artin, \emph{Geometric algebra},
Interscience Publishers, Inc., New York-London, 1957.

\bibitem[Art99]{Art99}
M.~Artin,
\emph{Noncommutative rings},
lecture notes, 1999.

\bibitem[AMC69]{AMC69}
M.F.~Atiyah, I.G.~Macdonald,
\emph{Introduction to commutative algebra},
Addison-Wesley Publishing Co., Reading, Mass.-London-Don Mills, Ont., 1969.

\bibitem[DSK11]{DSK11}
A.~De Sole, V.G.~Kac,
\emph{The variational Poisson cohomology},
preprint arXiv:1106.0082

\bibitem[DSK12]{DSK12}
A.~De Sole, V.G.~Kac,
\emph{Non-local Hamiltonian structures and applications to the theory of integrable systems},
in preparation.

\bibitem[Die43]{Die43}
J.~Dieudonn\'{e}, \emph{Les d\'{e}terminants sur un corps non commutatif},
Bull. Soc. Math. France \textbf{71}, (1943), 27--45.

\bibitem[Huf65]{Huf65}
G.~Hufford,
\emph{On the characteristic matrix of a matrix of differential operators},
J. Differential Equations \textbf{1}, (1965) 27--38.

\bibitem[Kol73]{Kol73}
E.R.~Kolchin, \emph{Differential algebra and algebraic groups},
Pure and Applied Mathematics, Vol. 54. Academic Press, New York-London, 1973.

\bibitem[Ler53]{Ler53}
J.~Leray, \emph{Hyperbolic differential equations},
The Institute for Advanced Study, Princeton, N. J., (1953) 240 pp.

\bibitem[Miy83]{Miy83}
M.~Miyake, \emph{On the determinant of matrices of ordinary differential operators
and an index theorem}, Funkcial. Ekvac. \textbf{26}, (1983) no. 2, 155--171.

\bibitem[SK75]{SK75}
M.~Sato, M.~Kashiwara,
\emph{The determinant of matrices of pseudo-differential operators},
Proc. Japan Acad. \text{51}, (1975) 17--19.

%\bibitem[S]{S}
%M.F.~Singer,
%\emph{Introduction to the Galois theorey of linear differential equations},
%Algebraic theory of differential equations, 1--82, London Math. Soc. Lecture Note Ser., \textbf{357},
%Cambridge Univ. Press, Cambridge, 2009.

\end{thebibliography}
\end{document}